\documentclass[reqno,12pt]{amsart}
\usepackage{amssymb}
\usepackage{amsmath}
\usepackage{mathrsfs}
\usepackage{amsmath,amssymb}
\usepackage[colorlinks = true,psdextra,unicode]{hyperref}
\usepackage{paralist}
\usepackage{graphics} 
\usepackage{epsfig} 
\usepackage[colorlinks = true]{hyperref}
\hypersetup{urlcolor = red, citecolor = blue}
\usepackage[top = 1in,bottom = 1in,left = 1in,right = 1in]{geometry}
\usepackage{cite}
\usepackage{esint}
\usepackage{verbatim}

\usepackage[pagewise]{lineno}
\usepackage{graphicx}
\usepackage{subcaption}

\usepackage{float}  
\usepackage{caption}
\usepackage{geometry}
\geometry{margin=2.5cm}

\allowdisplaybreaks[4]
\vfuzz2pt 
\hfuzz2pt 
\newtheorem{thm}{Theorem}[section]

\newtheorem{lem}[thm]{Lemma}
\newtheorem{prop}{Proposition}[section]
\theoremstyle{definition}

\numberwithin{equation}{section}

\begin{document}
\title [Quasilinear two-species chemotaxis system with two chemicals]
{Finite-time blow-up in a quasilinear two-species chemotaxis system with two chemicals }
\subjclass[2020]{35B44, 35B33, 35K57, 35K59, 35Q92, 92C17.}%
\author[Cai]{Mingzhang Cai}
\address{M. Cai: School of Mathematics, Southeast University, Nanjing 211189, P. R. China}
\email{220242012@seu.edu.cn}

\author[Li]{Yuxiang Li$^{\star}$}
\thanks{$^{\star}$Corresponding author}
\address{Y. Li: School of Mathematics, Southeast University, Nanjing 211189, P. R. China}
\email{lieyx@seu.edu.cn}

\author[Zeng]{Ziyue Zeng}%
\address{Z. Zeng: School of Mathematics, Southeast University, Nanjing 211189, P. R. China}
\email{ziyzzy@163.com}

\keywords{chemotaxis; two-species; finite time blow up}

\thanks{Supported in part by National Natural Science Foundation of China (No. 12271092, No. 11671079) and the Jiangsu Provincial Scientific Research Center of Applied Mathematics (No. BK20233002)}

\begin{abstract}
This paper investigates the finite-time blow-up phenomena to a quasilinear two-species chemotaxis system with two chemicals
\begin{align}\tag{$\star$}
	\begin{cases}
		u_t = \nabla \cdot \left(D_1(u) \nabla u\right) -  \nabla \cdot \left(u \nabla v\right), & x \in \Omega, \ t > 0, \\ 
		0 = \Delta v - \mu_2 + w, \quad \mu_2=\fint_{\Omega}w, & x \in \Omega, \ t > 0, \\ 
		w_t = \nabla \cdot \left(D_2(w) \nabla w\right)  -  \nabla \cdot \left(w \nabla z\right), & x \in \Omega, \ t > 0, \\ 
		0 = \Delta z - \mu_1 + u, \quad \mu_1=\fint_{\Omega}u, & x \in \Omega, \ t > 0, \\
		\frac{\partial u}{\partial \nu} = \frac{\partial v}{\partial \nu} = \frac{\partial w}{\partial \nu} = \frac{\partial z}{\partial \nu} = 0, & x \in \partial \Omega, \ t > 0, \\
		u(x, 0) = u_0(x), \quad w(x, 0) = w_0(x), & x \in \Omega,
	\end{cases}
\end{align}
where $\Omega \subset \mathbb{R}^n$ $(n \geqslant 3)$ is a smoothly bounded domain. The nonlinear diffusion functions \( D_1(s) \) and \( D_2(s) \) are of the following forms:
\begin{align*}
	D_1(s)\simeq  s^{m_1-1} \quad \text{and}\quad D_2(s) \simeq  s^{m_2-1}, \quad m_1,m_2> 1
\end{align*}
for $s\geqslant 1$. 

For the classical  two-species chemotaxis system with two chemicals (i.e. the second and fourth equations are replaced by $0 = \Delta v - v + w$ and $0 = \Delta z - z + u$ ), Zhong [J. Math. Anal. Appl., 500 (2021),
Paper No. 125130, pp. 22.] showed that the system possesses a globally bounded classical solution in the case that \[
m_1 + m_2 < \max\left\{m_1m_2 + \frac{2m_1}{ n},\ m_1m_2 + \frac{2m_2 }{ n}\right\}.
\] 

Complementing the boundedness result, we prove that the system ($\star$) admits solutions that blow up in finite time, if
\[
m_1 + m_2 > \max\left\{ m_1m_2 + \frac{2m_1}{ n},\ m_1m_2 + \frac{2m_2}{ n}\right\}
\]
with $n\geqslant 3$.

\end{abstract}

\maketitle

\section{Introduction}
This work is devoted to an investigation of two-species chemotaxis system with two chemicals proposed by Tao and Winkler\cite{2015-DCDSSB-TaoWinkler}
\begin{align}\label{eq1.1}
	\begin{cases}
		u_t = \nabla \cdot \left(D_1(u) \nabla u\right) -  \nabla \cdot \left(u \nabla v\right), & x \in \Omega, \ t > 0, \\ 
		0 = \Delta v - \mu_2 + w, \quad \mu_2=\fint_{\Omega}w, & x \in \Omega, \ t > 0, \\ 
		w_t = \nabla \cdot \left(D_2(w) \nabla w\right)  -  \nabla \cdot \left(w \nabla z\right), & x \in \Omega, \ t > 0, \\ 
		0 = \Delta z - \mu_1 + u, \quad \mu_1=\fint_{\Omega}u, & x \in \Omega, \ t > 0, \\
		\frac{\partial u}{\partial \nu} = \frac{\partial v}{\partial \nu} = \frac{\partial w}{\partial \nu} = \frac{\partial z}{\partial \nu} = 0, & x \in \partial \Omega, \ t > 0, \\
		u(x, 0) = u_0(x), \quad w(x, 0) = w_0(x), & x \in \Omega,
	\end{cases}
\end{align}
	where $\Omega \subset \mathbb{R}^n$ $(n \geqslant 3)$ is a smoothly bounded domain. The sensitivity functions, denoted as \( D_1(s) \) and \( D_2(s) \), exhibit asymptotic behavior of the form:
	\begin{align*}
		D_1(s)\simeq  s^{m_1-1} \quad \text{and}\quad D_2(s) \simeq  s^{m_2-1}, \quad m_1,m_2> 1
	\end{align*}
for $s\geqslant 1$. Complementing the boundedness result developed in \cite{2021-JMAA-Zhong}, we prove in this paper that system (\ref{eq1.1}) admits solutions that blow up in finite time.

The one-species chemotaxis system with one chemical \cite{1971-JTB-KellerSegel,1971-Jotb-KellerSegel, 2002-CAMQ-PainterHillen} is described as follows: 
\begin{align}\label{eq1.1.2}
	\begin{cases}
		u_t=\nabla \cdot(D(u) \nabla u)-\nabla \cdot(S(u) \nabla v), & x \in \Omega, t>0, \\ 
		\tau v_t=\Delta v-v+u , & x \in \Omega, t>0, \\
		\frac{\partial u}{\partial \nu} = \frac{\partial v}{\partial \nu} = 0, & x \in \partial \Omega, \ t > 0, \\
		u(x, 0) = u_0(x), \quad v(x, 0) = v_0(x), & x \in \Omega.
	\end{cases}
\end{align}
Considering that the diffusion of the chemical substance is faster than the random motion
of cells, J\"ager and Luckhaus\cite{1992-TAMS-JaegerLuckhaus} proposed the Keller-Segel system in the form
\begin{align}\label{eq1.1.1}
	\begin{cases}
		u_t=\nabla \cdot(D(u) \nabla u)-\nabla \cdot(S(u) \nabla v), & x \in \Omega, t>0, \\ 
		0=\Delta v-\mu+u, \quad \mu=\fint_{\Omega}u, & x \in \Omega, t>0 , \\
		\frac{\partial u}{\partial \nu} = \frac{\partial v}{\partial \nu} = 0, & x \in \partial \Omega, \ t > 0, \\
		u(x, 0) = u_0(x), \quad v(x, 0) = v_0(x), & x \in \Omega.
	\end{cases}
\end{align}
The critical mass $8\pi$ for $n=2$ was established by Nagai \cite{1995-AMSA-Nagai} for the radially symmetric system (\ref{eq1.1.2}) and (\ref{eq1.1.1}) with $D(u)=1$ and $S(u)=u$. Specifically, when \(m < 8\pi\), the corresponding solution remains global and bounded; however, if \(m > 8\pi\) and \(\int_{\Omega} u_0|x|^2 \mathrm{d}x\) is sufficiently small, there exist solutions that blow up in finite time. Subsequently, Nagai \cite{2001-JIA-Nagai} removed the radial symmetry assumption and showed that finite-time blow-up occurs when either \(q \in \Omega\) with \(m > 8\pi\), or \(q \in \partial \Omega\) with \(m > 4\pi\), provided that \(\int_{\Omega} u_0|x-q|^2 \mathrm{d}x\) is sufficiently small.
When considering more general forms \(D(u) = (1+u)^{-p}\) and \(S(u) = u(1+u)^{q-1}\), a critical blow-up exponent \(\frac{2}{n}\) is identified for system (\ref{eq1.1.1}). Winkler and Djie \cite{2010-NA-WinklerDjie} proved that if \(p+q > \frac{2}{n}\) and \(q > 0\), system (\ref{eq1.1.1}) admits radially symmetric solutions that blow up in finite time, whereas solutions remain globally bounded if \(p+q < \frac{2}{n}\). Similar results exist for the system (\ref{eq1.1.2}) have been established in \cite{2020-DCDSSS-Lankeit,2019-JDE-Winkler,2010-MMAS-Winkler,2012-JDE-TaoWinkler,2025-CVPDE-CaoFuest,2014-AAM-CieslakStinner,2012-JDE-CieslakStinner,2015-JDE-CieslakStinner}.

In contrast to classical systems, the chemotaxis system with indirect signal production can possess some distinct features concerning the blow-up or global existence of solutions
\begin{align}\label{indirect-01}
\begin{cases}
	u_t = \nabla \cdot \big(D(u)\nabla u - S(u)\nabla v\big), & x \in \Omega, \, t > 0, \\
	v_t = \Delta v - a_1 v + b_1 w, & x \in \Omega, \, t > 0, \\
	w_t = \Delta w - a_2 w + b_2 u, & x \in \Omega, \, t > 0,\\
	\frac{\partial u}{\partial \nu} = \frac{\partial v}{\partial \nu} = \frac{\partial w}{\partial \nu} = 0, & x \in \partial \Omega, \, t > 0,\\
	(u(x,0), v(x,0), w(x,0)) = (u_0(x), v_0(x), w_0(x)), & x \in \Omega.
\end{cases}
\end{align}
When $D(u)=1$ and $S(u)=u$, Fujie and Senba \cite{2017-JDE-FujieSenba,2019-JDE-FujieSenba} deduced a four-dimensional critical mass phenomenon for the classical solutions. It was shown that solutions are globally bounded for $n=4$ in the radially symmetric case provided that $\int_{\Omega} u_0 < 64 \pi^2$. Conversely, there exist initial data satisfying $\int_{\Omega} u_0 \in\left(64 \pi^2, \infty\right) \backslash 64 \pi^2 \mathbb{N}$ such that the solutions of (\ref{indirect-01}) under the mixed boundary conditions blow up. 
When \(D(u) = (1+u)^{-p}\) and \(S(u) = u(1+u)^{q-1}\), Ding and Wang \cite{2019-DCDSSB-Dingwang} proved that the system possess global bounded classical solutions if $q+p < \min\{1+\frac{2}{n},\frac{4}{n}\}$. For the Jäger-Luckhaus variant of the system (\ref{indirect-01}) (i.e. the second equation is modified to $\Delta v-\int_0 w /|\Omega|+w=0$), Tao and Winkler\cite{2025-JDE-TaoWinkler} found that there exist finite-time blow-up solutions if $p+q>\frac{4}{n}$ and $q>\frac{2}{n}$ with $n\geqslant 3$, under the radially symmetric condition. Mao and Li \cite{2026-JMAA-MaoLi} demonstrated that if $p+q > \frac{4}{n}$ and $q<\frac{2}{n}$ for $n\geqslant4$, radially symmetric initial data with large negative energy lead to solutions that blow up in infinite time.

The two-species chemotaxis system with two chemicals 
\begin{align}\label{eq1.1.3}
	\begin{cases}
		u_t=\nabla \cdot\left(D_1(u) \nabla u\right)-\nabla \cdot\left(S_1(u) \nabla v\right), & x \in \Omega, t>0, \\ 
		0=\Delta v-v+w, & x \in \Omega, t>0, \\ 
		w_t=\nabla \cdot\left(D_2(w) \nabla w\right)- \nabla \cdot\left(S_2(w) \nabla z\right), & x \in \Omega, t>0, \\ 
		0=\Delta z-z+u, & x \in \Omega, t>0, \\
		\frac{\partial u}{\partial \nu} = \frac{\partial v}{\partial \nu} = \frac{\partial w}{\partial \nu} = \frac{\partial z}{\partial \nu} = 0, & x \in \partial \Omega, \ t > 0, \\
		u(x, 0) = u_0(x), \quad w(x, 0) = w_0(x), & x \in \Omega,
	\end{cases}
\end{align}
exhibits phenomena similar to those of system (\ref{eq1.1.1}) and (\ref{indirect-01}). When $D_i(\xi)=1$ and $S_i(\xi)=\xi$ for $i=1,2$, a critical mass curve has recently been identified. Let $m_1:=\int_{\Omega} u_0(x) \mathrm{d} x$ and $m_2:=\int_{\Omega} w_0(x) \mathrm{d} x$. It has been shown that the solutions of (\ref{eq1.1.3}) blow up in finite time when $m_1 m_2-2 \pi\left(m_1+m_2\right)>0$ and $\int_{\Omega} u_0(x)\left|x-x_0\right|^2\mathrm{~d} x$, $\int_{\Omega} w_0(x)\left|x-x_0\right|^2\mathrm{~d} x$ are sufficiently small \cite{2018-N-YuWangZheng}. Conversely, if $m_1 m_2-2 \pi\left(m_1+m_2\right)<0$, all solutions remain globally bounded \cite{2024-NARWA-YuXueHuZhao}. For the system (\ref{eq1.1.3}) with $D_i(u)\simeq (u+1)^{p_i-1}$ and $S_i(u) \simeq u(1+u)^{q_i-1}$, Zheng \cite{2017-TMNA-Zheng} proved that all solutions are globally bounded under the conditions $q_1<\frac{2}{n}+p_1-1$ and $q_2<\frac{2}{n}+ p_2-1$. Subsequently, Zhong \cite{2021-JMAA-Zhong} studied the case $q_1=q_2=1$ and demonstrated that global classical solutions exist if either $p_1 p_2+\frac{2p_1}{n}>p_1+p_2$ or $p_1 p_2+\frac{2p_2}{n}>p_1+p_2$. For system (\ref{eq1.1.3}) with $p_i\equiv 1$, Zeng and Li \cite{2025--ZengLi-1} derived a critical blow-up curve (i.e. $q_1+q_2-\frac{4}{n}=\max\Big\{\big(q_1-\frac{2}{n}\big)q_2,\big(q_2-\frac{2}{n}\big)q_1\Big\}$ in the square $(0,\frac{4}{n}) \times (0,\frac{4}{n})$) characterizing global boundedness and finite-time blow-up. For system (\ref{eq1.1.3}) with $p_2\equiv q_2\equiv 1$, Zeng and Li \cite{2025--ZengLi} found two critical blow-up lines (i.e $q_1-(p_1-1)=2-\frac{n}{2}$ and $q_1=1-\frac{n}{2}$).

$\mathbf{Main\ results.}$ We assume that
\begin{align}\label{eq1.2}
	\begin{aligned}
		&D_1\text{ and }D_2\ \text{belong to } C^3([0, \infty))\ \text {with } D_1,D_2>0 \text{ in } [0,+\infty)
	\end{aligned}
\end{align}
and 
\begin{align}\label{eq1.4}
		D_1(s) \leqslant k_1 s^{m_1-1}, \quad  s \geqslant 1,
	\end{align}
	as well as
	\begin{align}\label{eq1.5}
		D_2(s) \leqslant k_2 s^{m_2-1}, \quad  s \geqslant 1,
	\end{align}
	with some $k_1,k_2 > 0$ and $m_1,m_2> 1$.
Suppose that 
\begin{align}\label{eq1.3}
	u_0, w_0 \in  W^{1,\infty}({\Omega})\ \text{ are radially symmetric and} \ u_0, w_0 \geqslant 0.
\end{align}

The local existence and uniqueness of classical solutions to system \eqref{eq1.1} are established in Proposition~\ref{local} by adapting the arguments from \cite{2010-NA-WinklerDjie,2015-DCDSSB-TaoWinkler,2025--ZengLi-1}.
 
\begin{prop}\label{local}
	Let $\Omega \subset \mathbb{R}^n$ $(n \geqslant 1)$ be a smoothly bounded domain. Suppose that $D_1,D_2$ satisfy $(\ref{eq1.2})$ and $u_0, w_0$ satisfy $(\ref{eq1.3})$. There exist $T_{\max } \in(0, \infty]$ and a uniquely determined pair $(u,v,w,z)$ of functions 
	\begin{align*}
		\begin{aligned}
			& u \in C^0\left(\overline{\Omega} \times\left[0, T_{\max }\right)\right) \cap C^{2,1}\left(\overline{\Omega} \times\left(0, T_{\max }\right)\right), \\
			& v \in C^{2,0}\left(\overline{\Omega} \times\left[0, T_{\max }\right)\right), \\
			& w \in C^0\left(\overline{\Omega} \times\left[0, T_{\max }\right)\right) \cap C^{2,1}\left(\overline{\Omega} \times\left(0, T_{\max }\right)\right), \\
			& z \in C^{2,0}\left(\overline{\Omega} \times\left[0, T_{\max }\right)\right),
		\end{aligned}
	\end{align*}
with $u \geqslant 0$, $w \geqslant 0$ in $\Omega \times (0,T_{\max})$ and $\int_{\Omega} v(\cdot,t) \mathrm{~d}x=0$, $\int_{\Omega} z(\cdot,t) \mathrm{~d}x=0$ for all $t \in (0,T_{\max})$, such that $(u,v,w,z)$ solves \eqref{eq1.1} classically in $\Omega \times\left(0, T_{\max }\right)$. In addition, the following blow-up criterion holds
	\begin{align}\label{exten}
		\text { if } T_{\max }<\infty \text {, then } \limsup _{t \nearrow T_{\max }}\left(\|u(\cdot, t)\|_{L^{\infty}(\Omega)}+\|w(\cdot, t)\|_{L^{\infty}(\Omega)}\right)=\infty \text {. }
	\end{align}
	Moreover,
	\begin{align*}
		\int_\Omega u(t) \mathrm{~d}x=\int_\Omega u_0 \mathrm{~d}x \ \text{ and } \ \int_\Omega w(t) \mathrm{~d}x= \int_\Omega w_0 \mathrm{~d}x,
		\quad t\in(0,T_{\max}).
	\end{align*}
\end{prop}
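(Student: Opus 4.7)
The plan is to follow the now-standard fixed-point approach for quasilinear chemotaxis systems with elliptic signal equations, in the spirit of \cite{2010-NA-WinklerDjie, 2015-DCDSSB-TaoWinkler, 2025--ZengLi-1}. First, I would eliminate $v$ and $z$ from the system: for any pair $(\tilde u, \tilde w)$ of nonnegative continuous functions, the Neumann problems
\[
-\Delta v + \fint_\Omega \tilde w = \tilde w, \qquad -\Delta z + \fint_\Omega \tilde u = \tilde u,
\]
together with the zero-mean conditions $\int_\Omega v = 0$ and $\int_\Omega z = 0$, admit unique solutions in $W^{2,p}(\Omega)$ for any $p \in (1,\infty)$, depending linearly and continuously on the data. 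Choosing $p > n$ yields $\nabla v, \nabla z \in L^\infty(\Omega)$ with Lipschitz dependence on $(\tilde w, \tilde u)$ in $L^\infty$, and this is the sole mechanism through which the two species couple.

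Next, I would insert these gradients as prescribed drift terms into the first and third equations, which become two decoupled quasilinear parabolic problems for $u$ and $w$ with Neumann data and smooth, strictly positive diffusion coefficients $D_1, D_2$. Classical quasilinear parabolic theory (Amann's semigroup framework, or Ladyzhenskaya-Solonnikov-Ural'tseva) then produces a unique classical solution $(u,w)$ on some short time interval $[0,T]$, with the required regularity, and nonnegativity coming from the parabolic maximum principle. Defining the solution operator $\Phi(\tilde u, \tilde w) := (u,w)$ on a closed ball in $C^0([0,T]; C^0(\overline{\Omega}))^2$, a contraction estimate for $T$ sufficiently small yields the unique local classical solution of the full coupled system \eqref{eq1.1}. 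The mass conservation identities then follow by integrating the equations for $u$ and $w$ over $\Omega$ and exploiting the Neumann boundary conditions together with the divergence structure of the right-hand sides.

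For the extensibility criterion \eqref{exten}, I would argue by contradiction: suppose $T_{\max} < \infty$ but $\|u(\cdot,t)\|_{L^\infty(\Omega)} + \|w(\cdot,t)\|_{L^\infty(\Omega)}$ stays bounded on $[0, T_{\max})$. Then elliptic regularity yields uniform $W^{2,p}$ control of $v$ and $z$ for arbitrary $p < \infty$, hence uniform $L^\infty$ bounds on $\nabla v$ and $\nabla z$. Parabolic $L^p$ and Schauder estimates subsequently produce uniform $C^{2+\alpha, 1+\alpha/2}$ bounds on $(u,w)$ up to $T_{\max}$, and applying the local existence result with initial data taken at a time $t_0$ close to $T_{\max}$ extends the solution past $T_{\max}$, contradicting its maximality. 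The main technical point is verifying that the Lipschitz constant of $\Phi$ can be made strictly less than one uniformly in $T$ as $T \to 0$; this is possible precisely because the inter-species coupling is mediated by a gradient of a function that is two derivatives smoother than its source, so the elliptic gain compensates the parabolic loss and the routine bookkeeping goes through exactly as in the references cited.
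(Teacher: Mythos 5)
Your outline is correct and coincides with the argument the paper itself relies on: the paper gives no proof of Proposition~\ref{local}, stating only that it follows ``by adapting the arguments from'' \cite{2010-NA-WinklerDjie,2015-DCDSSB-TaoWinkler,2025--ZengLi-1}, and those arguments are precisely the route you describe — elliptic solvability with zero-mean normalization to eliminate $v,z$, a Banach fixed-point/contraction argument for the resulting quasilinear parabolic problems with $W^{1,\infty}$ initial data, the maximum principle for nonnegativity, integration for mass conservation, and an elliptic--parabolic regularity bootstrap for the extensibility criterion \eqref{exten}. No gap to report.
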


We now state our main results. The following theorem is concerned with the blow-up of solutions to the system (\ref{eq1.1}).
\begin{thm}\label{thm1_1}
	Let $\Omega=B_R(0) \subset \mathbb{R}^n$ $(n \geqslant 3)$ with some $R>0$ and let
\begin{align}\label{eq1.6}
	m_1+m_2 > \max\left\{m_1m_2+\frac{2m_1}{n},m_1m_2+\frac{2m_2}{n}\right\}.
\end{align}
Suppose that $D_1,D_2$ satisfy $(\ref{eq1.2})$-$(\ref{eq1.5})$.
Then there exist functions $\hat{M}_1(r), \hat{M}_2(r) \in C^0([0, R])$ such that, whenever the radially symmetric initial data $u_0$, $w_0$ satisfy $(\ref{eq1.3})$ and
	\begin{align}\label{eq1.8}
		\int_{B_r(0)} u_0 \mathrm{~d}x \geqslant \hat{M}_1(r),\quad \int_{B_r(0)} w_0 \mathrm{~d}x \geqslant \hat{M}_2(r),\quad  r\in(0,R),
	\end{align}
	the corresponding solution of \eqref{eq1.1} blows up in finite time.
\end{thm}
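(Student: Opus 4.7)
The plan is to implement a Winkler-type moment-function blow-up argument, tailored to the radial Jäger--Luckhaus formulation with two species and nonlinear diffusion. Arguing by contradiction, suppose $T_{\max}=\infty$. Radial symmetry is preserved by (\ref{eq1.1}), so introducing the mass variables
\[
M_1(s,t)=n\omega_n\!\int_0^{s^{1/n}}\!\!u(\rho,t)\rho^{n-1}\,d\rho,\qquad M_2(s,t)=n\omega_n\!\int_0^{s^{1/n}}\!\!w(\rho,t)\rho^{n-1}\,d\rho
\]
for $s\in[0,R^n]$, I recast (\ref{eq1.1}) as a coupled scalar system on $(0,R^n)$. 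Solving the elliptic equations for $v_r$ and $z_r$ in closed form and using $u=\omega_n^{-1}M_{1,s}$, $w=\omega_n^{-1}M_{2,s}$, together with the time-independence of $\mu_1,\mu_2$ guaranteed by mass conservation, I obtain
\[
M_{1,t}=n^2 s^{2-\tfrac{2}{n}}D_1(u)M_{1,ss}+\tfrac{1}{\omega_n}M_{1,s}\bigl(M_2-\omega_n\mu_2 s\bigr),
\]
and the symmetric analogue for $M_2$, posed with $M_i(0,t)=0$ and $M_i(R^n,t)=\omega_n\mu_i R^n$.

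Following Winkler--Djie and the recent Zeng--Li analyses of related critical curves, I then introduce the weighted moment functionals
\[
\phi_i(t)=\int_0^{s_0}s^{-\gamma_i}(s_0-s)\,M_i(s,t)\,ds,\qquad i=1,2,
\]
for a cutoff $s_0\in(0,R^n)$ and exponents $\gamma_i\in(0,1)$ to be chosen. Differentiating in time, substituting the PDEs for $M_1,M_2$, and integrating by parts to transfer derivatives off $M_{i,s}$ and $M_{i,ss}$, I obtain for each $i$ a decomposition $\phi_i'(t)\geq J_i^{\mathrm{conv}}(t)-J_i^{\mathrm{diff}}(t)$. Using a concentration lemma, i.e.\ a pointwise lower bound $M_{3-i}(s,t)\geq \tfrac12\,\omega_n\mu_{3-i}R^n$ for $s$ above some cutoff and for all $t<T_{\max}$, which follows from the initial concentration (\ref{eq1.8}) with $\hat M_i$ chosen suitably, together with the hypothesis $D_i(s)\leq k_i s^{m_i-1}$, I aim to establish estimates of the form
\[
J_i^{\mathrm{conv}}(t)\geq c_1\,\phi_i(t)^{1+\delta_i},\qquad J_i^{\mathrm{diff}}(t)\leq c_2\,\phi_i(t)^{\beta_i}+c_3,
\]
where $\delta_i=\delta_i(\gamma_i,n)>0$ and $\beta_i=\beta_i(\gamma_i,m_i,n)$ arises from substituting $u_i=\omega_n^{-1}M_{i,s}$ into the porous-medium term.

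The decisive step is to verify that, under the hypothesis (\ref{eq1.6}), one can choose $\gamma_1,\gamma_2\in(0,1)$ so that $\beta_i<1+\delta_i$ holds for both $i=1,2$ simultaneously. This algebraic compatibility is precisely what (\ref{eq1.6}) encodes: the inequality $m_1+m_2>m_1m_2+2m_i/n$ opens an admissible $\gamma_i$-window in which the porous-medium diffusion of the $i$-th species is strictly subdominant to the chemotactic cross-coupling driven by species $3-i$. With such $\gamma_1,\gamma_2$ fixed and $\phi:=\phi_1+\phi_2$, summation produces the autonomous differential inequality
\[
\phi'(t)\geq C_1\phi(t)^{1+\delta}-C_2,\qquad \delta:=\min\{\delta_1,\delta_2\}.
\]
Choosing $\hat M_1(r),\hat M_2(r)$ large enough in (\ref{eq1.8}) forces $\phi(0)$ above the equilibrium threshold $(C_2/C_1)^{1/(1+\delta)}$, and a standard comparison argument then drives $\phi(t)\to\infty$ in finite time, contradicting globality through the extensibility criterion (\ref{exten}). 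I expect the main technical obstacle to lie in the coupled parameter tuning: identifying explicit $\gamma_1,\gamma_2$-ranges so that $\beta_i<1+\delta_i$ holds simultaneously for both species, extracting the "$\max$" asymmetry of (\ref{eq1.6}), and maintaining the concentration bound on $M_{3-i}$ uniformly in time so that the cross-coupling term retains a definite sign on the support of the weight $s^{-\gamma_i}(s_0-s)$.
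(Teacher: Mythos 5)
Your proposal has two genuine gaps, and they sit exactly at the points you flag as "the decisive step" and the "concentration lemma". First, the estimate $J_i^{\mathrm{conv}}(t)\geqslant c_1\,\phi_i(t)^{1+\delta_i}$ is not obtainable from your ingredients. In the single-species Winkler--Djie moment argument the superlinearity of the convective contribution comes from the quadratic \emph{self}-interaction $MM_s=\tfrac12 (M^2)_s$; here the convective term is the \emph{cross} term $M_{1,s}\bigl(M_2-\omega_n\mu_2 s\bigr)$, and the only information you inject about $M_2$ on the support of the weight is a constant lower bound. After integrating by parts this yields at best a bound that is linear in $M_1$, hence linear in $\phi_1$, not a power $\phi_1^{1+\delta_1}$. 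Worse, if a decoupled ODI $\phi_i'\geqslant c_1\phi_i^{1+\delta_i}-c_2\phi_i^{\beta_i}-c_3$ held for each species separately, summing it would produce a blow-up criterion depending on each $m_i$ (and $n$) individually, whereas the correct condition \eqref{eq1.6} genuinely couples $m_1$ and $m_2$: by Zhong's boundedness result no blow-up occurs when, say, $m_1+m_2<m_1m_2+\tfrac{2m_1}{n}$, so no per-species ODI of the type you sum can be true. The interlocking of the two exponents must enter structurally --- in the paper it does so through the \emph{simultaneous} choice of the pair $(\alpha,\beta)$ in Lemma~\ref{alphabeta}, which feeds into a coupled pair of subsolutions --- and cannot be recovered by tuning $\gamma_1$ and $\gamma_2$ species by species.

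Second, the uniform-in-time concentration bound $M_{3-i}(s,t)\geqslant\tfrac12\,\omega_n\mu_{3-i}R^n$ for $s$ above a cutoff inside $(0,s_0)$ is asserted, not proved, and it does not follow from \eqref{eq1.8}: an initial concentration of $w$ near the origin is not automatically preserved by the evolution, and proving its persistence is essentially equivalent to the lower bound $W\geqslant\underline{W}$ that is the whole point of the argument. Without it even the sign of $M_2-\omega_n\mu_2 s$ on the support of your weight is not guaranteed, so $J_i^{\mathrm{conv}}$ need not be favorable. The paper circumvents both issues by a different route: it constructs explicit piecewise subsolutions $\underline{U},\underline{W}$ of the mass system, driven by the ODE $y'=\Lambda y^{1+\delta}$ whose solution blows up at a prescribed finite time, verifies $\mathcal{P}[\underline{U},\underline{W}]\leqslant 0$ and $\mathcal{Q}[\underline{U},\underline{W}]\leqslant 0$ in the inner, transition and outer regions (Lemmas \ref{lem3.1}--\ref{lem3.4}), and then applies the comparison principle of Lemma~\ref{lem2.1} to get $u(0,t)\geqslant n\,\underline{U}_s(0,t)\to\infty$, forcing $T_{\max}<\infty$. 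If you wish to salvage a functional approach, you would at least need a genuinely coupled functional (for instance one involving both species multiplicatively, as in the critical-mass-curve analyses for the semilinear system), together with a proof of persistence of concentration; the sum $\phi_1+\phi_2$ with decoupled superlinear estimates cannot work.
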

We organize the rest of the paper as follows. In Section~\ref{comparison}, we establish a weak comparison principle. Subsequently, Section~\ref{subsolution} is devoted to demonstrating the existence of finite-time blow-up solutions.

\section{Comparison principle}\label{comparison}
     We introduce the mass distribution functions
	\begin{align}\label{eq2.1}
		U(s,t):=\int^{s^\frac{1}{n}}_0 r^{n-1}u(r,t)\mathrm{~d}r\ \ \text{ and }\ \ W(s,t):=\int^{s^\frac{1}{n}}_0 r^{n-1}w(r,t)\mathrm{~d}r,
	\end{align}
	for $s \in\left[0, R^n\right]$ and $ t \in\left[0, T_{\max }\right)$. 
	Denote
	\begin{align}\label{eq2.2}
		\mu^{\star} := \max\left\{\fint_{\Omega} u_0 \mathrm{d}x, \fint_{\Omega} w_0 \mathrm{d}x \right\},
	\end{align}
	and
	\begin{align}\label{eq2.3}
		\mu_{\star} := \min\left\{\fint_{\Omega} u_0 \mathrm{d}x, \fint_{\Omega} w_0 \mathrm{d}x \right\}.
	\end{align}
	For any $T > 0$ and $\varphi, \psi\in C^1\left(\left[0, R^n\right] \times[0, T)\right)$ such that $\varphi_s, \psi_s \geqslant 0$ in $(0,R^n) \times (0,T)$, and $\varphi(\cdot,t) \in W_{l o c}^{2, \infty}\left(\left(0, R^n\right)\right)$, $\psi(\cdot,t) \in W_{l o c}^{2, \infty}\left(\left(0, R^n\right)\right)$ for all $t \in (0,T)$, define
	\begin{align}\label{eq2.4}
		\begin{cases}
			\begin{aligned}
				&\mathcal{P}[\varphi, \psi](s, t):=\varphi_t-n^2 s^{2-\frac{2}{n}} \varphi_{s s} D_1\left(n \varphi_s\right)-n \varphi_s \cdot\left(\psi-\frac{\mu^{\star} s}{n}\right) \\
				& \mathcal{Q}[\varphi, \psi](s, t):=\psi_t-n^2 s^{2-\frac{2}{n}} \psi_{s s} D_2\left(n \psi_s\right)-n \psi_s \cdot\left(\varphi-\frac{\mu^{\star} s}{n}\right)
			\end{aligned}
		\end{cases}
	\end{align}
	for $t \in(0, T)$ and a.e. $s \in\left(0, R^n\right)$.  \eqref{eq1.1} and \eqref{eq2.4}  yield the following Dirichlet parabolic system
	\begin{align}\label{eq2.5}
		\begin{split}
			\begin{cases}
				\mathcal{P}[U, W](s, t) \geqslant 0,\quad &s\in(0,R^n), t\in(0,T_{\max}), \\
				\mathcal{Q}[U, W](s, t) \geqslant 0, \quad &s\in(0,R^n), t\in(0,T_{\max}),\\
				U(0,t)=W(0,t)=0,\quad &t\in(0,T_{\max}),\\
				U(R^n,t)=\frac{\mu_{1}R^n}{n}\geqslant\frac{\mu_{\star}R^n}{n},\ W(R^n,t)=\frac{\mu_{2}R^n}{n}\geqslant\frac{\mu_{\star}R^n}{n},\quad &t\in(0,T_{\max}),\\
				U(s,0)=\int^{s^\frac{1}{n}}_0 r^{n-1}u_0(r,t)\mathrm{d}r,\quad &s\in(0,R^n),\\
				W(s,0)=\int^{s^\frac{1}{n}}_0 r^{n-1}w_0(r,t)\mathrm{d}r,\quad &s\in(0,R^n).
			\end{cases}
		\end{split}
	\end{align}

An essential element for our argument is the following variant of the parabolic comparison principle, which is similar to \cite{2025--ZengLi-1}. We omit the proof.
	\begin{lem}\label{lem2.1}
		Let \( T > 0 \), and assume that \( D_1 \) and \( D_2 \) satisfy $(\ref{eq1.2})$. Suppose that $\underline{U}, \overline{U}, \underline{W}, \overline{W} \in C^1\left(\left[0, R^n\right] \times[0, T)\right)$ such that for all \( t \in (0, T) \),  
		$$\underline{U}(\cdot, t), \overline{U}(\cdot, t), \underline{W}(\cdot, t), \overline{W}(\cdot, t) \in W_{l o c}^{2, \infty}\left(\left(0, R^n\right)\right).$$
		Moreover, assume that  
		$$\underline{U}_s, \overline{U}_s, \underline{W}_s, \overline{W}_s\geqslant0, \quad (s,t) \in (0,R^n) \times (0,T).$$ 
		If for all $t \in(0, T)$ and a.e. $s \in\left(0, R^n\right)$, we have
		\begin{align}\label{eq2.6}
			\begin{array}{ll}
				\mathcal{P}[\underline{U}, \underline{W}](s, t) \leqslant 0, & \mathcal{P}[\overline{U}, \overline{W}](s, t) \geqslant 0, \\
				\mathcal{Q}[\underline{U}, \underline{W}](s, t) \leqslant 0, & \mathcal{Q}[\overline{U}, \overline{W}](s, t) \geqslant 0,
			\end{array}
		\end{align}
		and for $s \in [0,R^n]$, we have
	\begin{align}\label{eq2.8}
		\underline{U}(s, 0) \leqslant \overline{U}(s, 0),
		\quad   \underline{W}(s, 0) \leqslant \overline{W}(s, 0),
	\end{align}
		as well as for all $t \in[0, T)$, we have
		\begin{align}\label{eq2.7}
			\begin{array}{lll}
				\underline{U}(0, t) \leqslant \overline{U}(0, t), & \underline{U}\left(R^n, t\right) \leqslant \overline{U}\left(R^n, t\right), \\
				\underline{W}(0, t) \leqslant \overline{W}(0, t), & \underline{W}\left(R^n, t\right) \leqslant \overline{W}\left(R^n, t\right),
			\end{array}
		\end{align}
		then
		\begin{align}\label{eq2.9}
			\underline{U}(s, t) \leqslant \overline{U}(s, t), \quad
			\underline{W}(s, t) \leqslant \overline{W}(s, t), 
			\quad  (s,t) \in\left[0, R^n\right] \times [0, T).
		\end{align}
	\end{lem}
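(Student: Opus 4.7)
The approach I would take is a standard parabolic comparison via a strict subsolution perturbation, adapted to the coupled degenerate system governed by $(\mathcal{P},\mathcal{Q})$. Fix an arbitrary $T_0 \in (0,T)$ and introduce the perturbed supersolutions
\[
\hat{U}(s,t) := \overline{U}(s,t) + \epsilon e^{Ct}, \qquad \hat{W}(s,t) := \overline{W}(s,t) + \epsilon e^{Ct},
\]
for small $\epsilon > 0$ and a large constant $C > 0$ to be chosen. The target is to prove $\underline{U} < \hat{U}$ and $\underline{W} < \hat{W}$ throughout $[0, R^n] \times [0, T_0]$, after which the lemma will follow by sending $\epsilon \to 0$ and then $T_0 \nearrow T$.

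By \eqref{eq2.8}, \eqref{eq2.7}, and the $\epsilon$-shift, strict inequalities already hold at $t=0$ and on the lateral boundary $\{s=0,R^n\}$. I would define $T^{\star}$ as the supremum of times $\tau \in [0,T_0]$ for which the strict ordering $\underline{U}<\hat{U}$ and $\underline{W}<\hat{W}$ persists on $[0,R^n]\times[0,\tau]$, and argue by contradiction that $T^{\star}<T_0$ is impossible. By continuity, at $t=T^{\star}$ either $\underline{U}-\hat{U}$ or $\underline{W}-\hat{W}$ first reaches $0$ at some point $(s_1, T^{\star})$ which, thanks to the strict ordering elsewhere, must satisfy $s_1 \in (0, R^n)$ and $T^{\star}>0$.

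Assume by symmetry that the first touching occurs for $U$. At $(s_1,T^{\star})$ the function $\Phi := \underline{U}-\hat{U}$ attains an interior maximum equal to $0$, so the standard calculus inequalities
\[
\Phi_t(s_1,T^{\star})\geq 0,\qquad \Phi_s(s_1,T^{\star})=0,\qquad \Phi_{ss}(s_1,T^{\star})\leq 0
\]
hold, the last understood in the a.e./Lebesgue-point sense justified by the $W^{2,\infty}_{\mathrm{loc}}$-regularity. Hence at the touching point $\underline{U}_t \geq \overline{U}_t + C\epsilon e^{CT^{\star}}$, $\underline{U}_s=\overline{U}_s$, and $\underline{U}_{ss}\leq \overline{U}_{ss}$, while $\underline{W}(s_1,T^{\star})\leq \overline{W}(s_1,T^{\star}) + \epsilon e^{CT^{\star}}$. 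Subtracting $\mathcal{P}[\overline{U},\overline{W}]\geq 0$ from $\mathcal{P}[\underline{U},\underline{W}]\leq 0$ and evaluating at $(s_1,T^{\star})$, the diffusive contribution $n^2 s_1^{2-2/n}D_1(n\overline{U}_s)(\underline{U}_{ss}-\overline{U}_{ss})$ is nonpositive (since $D_1>0$ by \eqref{eq1.2}), while the transport terms collapse, thanks to $\underline{U}_s=\overline{U}_s$, to $n\overline{U}_s(\underline{W}-\overline{W})$. This leaves
\[
C\epsilon e^{CT^{\star}} \leq n\,\overline{U}_s(s_1,T^{\star})\cdot \epsilon e^{CT^{\star}},
\]
so choosing $C$ strictly larger than $n\max\bigl(\|\overline{U}_s\|_{L^{\infty}([0,R^n]\times[0,T_0])},\|\overline{W}_s\|_{L^{\infty}([0,R^n]\times[0,T_0])}\bigr)$ (finite by the $C^1$-hypothesis, and non-negative since $\overline{U}_s\geq 0$) produces the desired contradiction. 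A first touching for $W$ is treated identically with $\mathcal{Q}$, $D_2$, and $\overline{W}_s$ in place of $\mathcal{P}$, $D_1$, and $\overline{U}_s$.

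The main technical obstacle I expect is precisely the reduced regularity: the pointwise inequality $\Phi_{ss}\leq 0$ at the interior maximum cannot be invoked directly because $\underline{U}$ and $\overline{U}$ are only $W^{2,\infty}_{\mathrm{loc}}$ in $s$. My plan for handling this is either to choose the touching point at a common Lebesgue point of the essentially bounded second derivatives, or to mollify the sub- and supersolutions in $s$, run the pointwise argument in the resulting $C^2$ setting, and pass to the limit. This is the same device used in the parallel comparison principle of \cite{2025--ZengLi-1}, which is precisely why the authors feel entitled to omit the proof here.
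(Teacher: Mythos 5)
The paper itself gives no proof of this lemma (it defers to \cite{2025--ZengLi-1}), so your proposal can only be judged on its own terms. The overall scheme you use --- perturbing to $\hat U=\overline U+\epsilon e^{Ct}$, $\hat W=\overline W+\epsilon e^{Ct}$, locating a first interior touching point, exploiting the exact cancellation of the $\frac{\mu^{\star}s}{n}$-terms when $\underline U_s=\overline U_s$, using $\overline U_s,\overline W_s\geqslant 0$ to bound $n\overline U_s(\underline W-\overline W)\leqslant n\overline U_s\,\epsilon e^{Ct}$, and then choosing $C>n\max\{\sup\overline U_s,\sup\overline W_s\}$ --- is the natural one and the algebra is correct. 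The genuine gap is exactly the step you flag and then dismiss too quickly: with only $\underline U(\cdot,t),\overline U(\cdot,t)\in W^{2,\infty}_{loc}$ and the inequalities \eqref{eq2.6} holding merely for a.e.\ $s$, you are not entitled to evaluate $\Phi_{ss}\leqslant 0$, nor the differential inequalities themselves, \emph{at} the touching point $(s_1,T^{\star})$: that single point may fail to be a point of twice-differentiability and may lie in the exceptional null set. Neither of your proposed remedies works as stated. You cannot ``choose the touching point at a common Lebesgue point'' --- the touching point is dictated by the functions, not selected by you. And mollifying $\underline U,\overline U$ in $s$ does not preserve the sub-/supersolution inequalities, because the operator is nonlinear and degenerate (the term $n^2s^{2-\frac2n}\varphi_{ss}D_1(n\varphi_s)$ and the product $\varphi_s\psi$ do not commute with convolution), so ``run the argument in the $C^2$ setting and pass to the limit'' hides a commutator estimate that is precisely the content one would have to supply.

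The argument can be completed, but it needs an actual device at this point. One option is a one-dimensional Bony-type selection: since $\Phi(\cdot,T^{\star})\in W^{2,\infty}_{loc}$ is $C^1$ and has an interior maximum at $s_1$, for every $r>0$ the set $\{s\in(s_1,s_1+r):\Phi_{ss}(s)\leqslant 1/k\}$ has positive measure (otherwise $\Phi_s\geqslant c(s-s_1)$ to the right, contradicting the maximum), so one can pick admissible points $s_k\to s_1$ at which \eqref{eq2.6} holds and $\Phi_{ss}(s_k)\leqslant 1/k$, write the diffusion difference as $D_1(n\underline U_s)(\underline U_{ss}-\overline U_{ss})+\overline U_{ss}\bigl(D_1(n\underline U_s)-D_1(n\overline U_s)\bigr)$, and pass to the limit using the $C^1$-continuity of first-order quantities and the local $L^\infty$ bound on $\overline U_{ss}$. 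The alternative, which is the standard route in this literature (and presumably in the cited preprint), avoids pointwise evaluation altogether: set $d=\underline U-\overline U$, $e=\underline W-\overline W$, test the subtracted inequalities with indicator-type test functions and estimate $\frac{d}{dt}\int_0^{R^n}\bigl(d_+ + e_+\bigr)$ (or the squares of the positive parts) by integration by parts in $s$, using the local Lipschitz continuity of $D_1,D_2$ and the uniform $C^1$ bounds on $[0,R^n]\times[0,T_0]$, and conclude by Gronwall. As written, your proof has the right skeleton but does not yet bridge this regularity issue, which is the whole technical point of the lemma.
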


\section{ Construction of subsolutions}\label{subsolution}
We follow the construction of subsolution in \cite{2025-JDE-TaoWinkler}.
In order to construct subsolutions $\underline{U}, \underline{W}$, we shall first select the parameters $\alpha, \beta$ and $\delta$.

\begin{lem}\label{alphabeta}
	Let $n \geqslant 3$. Suppose that $m_1,m_2> 1$ satisfy $(\ref{eq1.6})$.  Then there exist $\alpha,\beta,\delta \in (0,1)$ such that
	\begin{align}\label{delta1}
		\alpha+\delta-1<0
	\end{align}
and
	\begin{align}\label{delta2}
	\beta+\delta-1<0,
    \end{align}
as well as
	\begin{align}\label{pa}
		(m_1-1)(1-\alpha)+\beta-1+\frac{2}{n}<0
	\end{align}
and
	\begin{align}\label{qb}
		(m_2-1)(1-\beta)+\alpha-1+\frac{2}{n}<0.
	\end{align}
\end{lem}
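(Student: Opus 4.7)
The plan is to treat this as an elementary feasibility problem in $(\alpha, \beta)$, with $\delta$ handled last. First I would set $\epsilon := 1-\alpha$ and $\eta := 1-\beta$, under which (\ref{pa}) and (\ref{qb}) become
\begin{align*}
\eta > (m_1-1)\epsilon + \tfrac{2}{n}, \qquad \epsilon > (m_2-1)\eta + \tfrac{2}{n},
\end{align*}
while $\alpha, \beta \in (0,1)$ translates directly to $\epsilon, \eta \in (0,1)$. The natural candidate is the intersection of the two boundary lines; solving the corresponding equalities simultaneously gives
\begin{align*}
\epsilon^{\star} = \frac{2m_2/n}{m_1 + m_2 - m_1 m_2}, \qquad \eta^{\star} = \frac{2m_1/n}{m_1 + m_2 - m_1 m_2}.
\end{align*}
Hypothesis (\ref{eq1.6}) plays its double role here: it forces $m_1 + m_2 - m_1 m_2$ to exceed both $2m_1/n$ and $2m_2/n$, which are strictly positive, so both $\epsilon^{\star}$ and $\eta^{\star}$ lie in $(0,1)$.

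Next I would argue that one can perturb off this corner into the interior of the feasible region. Setting $\epsilon = \epsilon^{\star} + \rho$ and $\eta = \eta^{\star} + \sigma$ with $\rho, \sigma > 0$, the two strict inequalities reduce, after using the corner identities, to $\sigma > (m_1-1)\rho$ and $\rho > (m_2-1)\sigma$; chaining these yields $(m_1-1)(m_2-1) < 1$, i.e.\ $m_1 m_2 < m_1 + m_2$, which is again implied by (\ref{eq1.6}) because $m_1+m_2 > m_1 m_2 + 2m_1/n > m_1 m_2$. I would fix $\rho > 0$ small enough that $\epsilon^{\star} + \rho < 1$ and then pick $\sigma \in \bigl((m_1-1)\rho,\, \rho/(m_2-1)\bigr)$, further shrinking $\rho$ if necessary so that $\eta^{\star} + \sigma < 1$ as well. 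Translating back produces admissible $\alpha, \beta \in (0,1)$ satisfying (\ref{pa}) and (\ref{qb}).

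Finally, (\ref{delta1}) and (\ref{delta2}) are just $\delta < 1-\alpha$ and $\delta < 1-\beta$, so setting $\delta := \tfrac{1}{2}\min\{1-\alpha, 1-\beta\} \in (0,1)$ completes the construction. The only real obstacle is recognizing what (\ref{eq1.6}) encodes: it asserts exactly that the intersection corner $(\epsilon^{\star}, \eta^{\star})$ of the two feasibility constraints lies in the open unit square and that the angle between the two boundary lines opens into $(0,1)^2$. Once this geometric picture is unpacked, the argument reduces to a short linear perturbation.
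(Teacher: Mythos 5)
Your argument is correct, and it reaches the conclusion by a more unified route than the paper. The paper proves the lemma by splitting $(m_1,m_2)$ into four regions according to whether each $m_i$ is below or above $2-\tfrac{2}{n}$: when both are below, it simply lets $(\alpha,\beta)\to(0,0)$; it checks that \eqref{eq1.6} excludes the region where both are above; and in the mixed regions it fixes an auxiliary pair $(m_1^\star,m_2^\star)$ strictly larger than $(m_1,m_2)$ still satisfying \eqref{eq1.6}, chooses $\alpha,\beta$ so that \eqref{pa}--\eqref{qb} hold with \emph{equality} at $(m_1^\star,m_2^\star)$ (these are exactly your corner values $\epsilon^\star,\eta^\star$ evaluated at the enlarged pair), and then exploits monotonicity in $m_1,m_2$ (possible since $1-\alpha,1-\beta>0$) to obtain strict inequalities at the original $(m_1,m_2)$. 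You instead stay at $(m_1,m_2)$ and perturb off the corner within the $(\epsilon,\eta)$-plane; the nonemptiness of your perturbation cone amounts to $(m_1-1)(m_2-1)<1$, which \eqref{eq1.6} supplies, and your choice $\sigma\in\bigl((m_1-1)\rho,\ \rho/(m_2-1)\bigr)$ with $\rho$ small keeps $(\epsilon,\eta)$ in the open unit square, so the construction closes. Your version avoids the case distinction entirely and makes the role of \eqref{eq1.6} transparent (corner inside $(0,1)^2$ plus an open cone of admissible directions); the paper's case analysis buys some incidental structural information, namely that \eqref{eq1.6} holds automatically when $m_1,m_2<2-\tfrac{2}{n}$ and never when both exceed this threshold, which the lemma itself does not require. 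The treatment of $\delta$ differs only in order (the paper fixes it first, you fix it last as half of $\min\{1-\alpha,1-\beta\}$), and both are fine since $\delta$ only needs to be positive and below that minimum.
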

\begin{proof}
When $\delta \rightarrow 0$, we have $\alpha+\delta-1 \rightarrow \alpha-1<0$ and $\beta+\delta-1 \rightarrow \beta-1<0$ for all $\alpha,\beta \in (0,1)$. Thus, we can find $\delta_{\star} \in (0,\frac{1}{2})$ such that \eqref{delta1} and \eqref{delta2} hold for all $\alpha,\beta \in (0,1)$ and $\delta \in (0,\delta_{\star})$.

In the following, we divide the domain $(1,\infty) \times (1,\infty)$ into four disjoint regions
\begin{align*}
&S_1=\Big\{(m_1, m_2)\in (1,\infty) \times (1,\infty) \mid m_1<2-\frac{2}{n},\quad m_2<2-\frac{2}{n}\Big\}, \\
&S_2=\Big\{(m_1, m_2)\in (1,\infty) \times (1,\infty) \mid m_1\geqslant 2-\frac{2}{n},\quad m_2\geqslant 2-\frac{2}{n}\Big\},\\
&S_3=\Big\{(m_1, m_2)\in (1,\infty) \times (1,\infty) \mid m_1\geqslant 2-\frac{2}{n},\quad m_2< 2-\frac{2}{n}\Big\},\\ 
&S_4=\Big\{(m_1, m_2)\in (1,\infty) \times (1,\infty) \mid m_1< 2-\frac{2}{n},\quad m_2\geqslant 2-\frac{2}{n}\Big\}.
\end{align*}
\textbf{Case 1:} $(m_1, m_2)\in S_1$. Thus, we have
	\begin{align*}
		\frac{1}{m_1}+\frac{1}{m_2}-\frac{2}{m_1n}
		=\frac{n-2}{m_1n}+\frac{1}{m_2} > \frac{n-2}{2n-2}+\frac{n}{2n-2}=1,
	\end{align*}
which implies $m_1+m_2 > m_1m_2+\frac{2m_2}{n}$. Similarly, we can easily check $m_1+m_2 > m_1m_2+\frac{2m_2}{n}$. Thus, for all $(m_1, m_2)\in S_1$, (\ref{eq1.6}) holds.

For all $(m_1, m_2)\in S_1$, we have $(m_1-1)(1-\alpha)+\beta-1+\frac{2}{n} \rightarrow m_1-2+\frac{2}{n}<0$ and $	(m_2-1)(1-\beta)+\alpha-1+\frac{2}{n} \rightarrow m_2-2+\frac{2}{n}<0$ as $(\alpha, \beta) \rightarrow (0,0)$. Then we can find $\alpha_{1} \in (0, \frac{1}{2})$ and $\beta_{1} \in (0, \frac{1}{2})$ such that (\ref{pa}) and (\ref{qb}) hold for all $\alpha \in (0,\alpha_1)$ and $\beta \in (0,\beta_1)$.  

\textbf{Case 2:} $(m_1, m_2)\in S_2$. Since 
\begin{align*}
 \frac{1}{m_1}+\frac{1}{m_2}-\frac{2}{m_1n}
 =\frac{n-2}{m_1n}+\frac{1}{m_2} \leqslant \frac{n-2}{2n-2}+\frac{n}{2n-2}=1,
\end{align*}
we find $m_1+m_2 \leqslant m_1m_2+\frac{2m_2}{n}$, which implies (\ref{eq1.6}) is not satisfied for any $(m_1, m_2)\in S_2$.

\textbf{Case 3:} $(m_1, m_2)\in S_3$.
Denote
\begin{align*} 
S=& \left\{(m_1, m_2) \mid m_1+m_2 > \max\left\{m_1m_2+\frac{2m_1}{n},m_1m_2+\frac{2m_2}{n}\right\} \right\} \cap S_3 \\
=& \left\{(m_1, m_2) \mid m_1\geqslant 2-\frac{2}{n}, m_2>1, m_1+m_2 > m_1m_2+\frac{2m_1}{n} \right\}. 
\end{align*}
Thus, for all $(m_1, m_2)\in S$, we can fix $({m_1}^{\star}, {m_2}^{\star})\in S$ satisfying $m_1<{m_1}^{\star}$ and $ m_2<{m_2}^{\star}$. Take
$$\alpha=1+\frac{\frac{2}{n} {m_2}^{\star}}{({m_1}^{\star}-1)({m_2}^{\star}-1)-1}, \quad \beta=1+\frac{\frac{2}{n} {m_1}^{\star}}{({m_1}^{\star}-1)({m_2}^{\star}-1)-1} .$$
Then we can easily check
\begin{align}\label{=0-1}
 ({m_1}^{\star}-1)(1-\alpha)+\beta-1+\frac{2}{n}=0,
 \end{align}
  and 
\begin{align}\label{=0-2} ({m_2}^{\star}-1)(1-\beta)+\alpha-1+\frac{2}{n}=0.
\end{align}
Owing to  $({m_1}^{\star}, {m_2}^{\star})\in S$, we have  
$$
	{m_1}^{\star}+{m_2}^{\star}-{m_1}^{\star}{m_2}^{\star}>\frac{2{m_2}^{\star}}{n},
$$
which implies 
$$ ({m_1}^{\star}-1)({m_2}^{\star}-1)-1={m_1}^{\star}{m_2}^{\star}-{m_1}^{\star}-{m_2}^{\star}
		< -\frac{2{m_2}^{\star}}{n}<0 .
$$
Then we deduce that
\begin{align}\label{a01}
 -1<\frac{\frac{2}{n} {m_2}^{\star}}{({m_1}^{\star}-1)({m_2}^{\star}-1)-1}<0. 
\end{align}
According to  $({m_1}^{\star}, {m_2}^{\star})\in S$, we have  
$$		{m_1}^{\star}+{m_2}^{\star}-{m_1}^{\star}{m_2}^{\star}>\frac{2{m_1}^{\star}}{n}. 
$$ 
Similarly, we obtain
\begin{align}\label{b01}
	-1< \frac{\frac{2}{n} {m_1}^{\star}}{({m_1}^{\star}-1)({m_2}^{\star}-1)-1} <0.
\end{align}
\eqref{a01} and \eqref{b01} confirm that $\alpha,\beta\in (0,1)$. Then,  using (\ref{=0-1}) and (\ref{=0-2}), along with $m_1<{m_1}^{\star}$ and $ m_2<{m_2}^{\star}$, we deduce that, for all $(m_1, m_2)\in S$, there exist  $\alpha,\beta \in (0,1)$ such that \eqref{pa} and \eqref{qb} hold.

\textbf{Case 4:} $(m_1, m_2)\in S_4$. The symmetry of $m_1$ and $m_2$
in \eqref{eq1.6} implies that a result similar to Case 3 can be obtained.
\end{proof}

Let $\alpha, \beta \in (0,1)$ as determined in Lemma \ref{alphabeta}. For all $T>0$ and $y \in C^1([0, T))$ with $y(t)>\frac{1}{R^n}$ for all $t \in (0,T)$, we introduce
\begin{align}\label{phi}
	& \hat{U}(s, t) =
	\begin{cases}
		l y^{1 - \alpha}(t) s, & t \in [0, T), s \in \left[0, \frac{1}{y(t)}\right], \\
		l \alpha^{-\alpha}  \left(s - \frac{1 - \alpha}{y(t)}\right)^\alpha, & t \in [0, T), s \in \left(\frac{1}{y(t)}, R^n\right],
	\end{cases} 
\end{align}
\begin{align}\label{psi}
	& \hat{W}(s, t) =
	\begin{cases}
		l y^{1 - \beta}(t) s, & t \in [0, T), s \in \left[0, \frac{1}{y(t)}\right], \\
		l \beta^{-\beta}  \left(s - \frac{1 - \beta}{y(t)}\right)^\beta, & t \in [0, T), s \in \left(\frac{1}{y(t)}, R^n\right].
	\end{cases}
\end{align}
where
\begin{align}\label{ldef}
	l = \frac{\mu_{\star} R^n}{n\mathrm{e}^{\frac{1}{\mathrm{e}}}(R^n + 1)},
\end{align}
and $\mu_{\star}$ is defined in \eqref{eq2.3}. 
Therefore, by straightforward computations we can derive
\begin{align}\label{phit}
	& \hat{U}_t(s, t) =
	\begin{cases}
		l (1 - \alpha) y^{-\alpha}(t)y^{\prime}(t) s, & t \in (0, T), s \in \left(0, \frac{1}{y(t)}\right), \\
		l \alpha^{1 - \alpha} (1 - \alpha)  \left(s - \frac{1 - \alpha}{y(t)}\right)^{\alpha - 1} \frac{y^{\prime}(t)}{y^2(t)}, & t \in [0, T), s \in \left(\frac{1}{y(t)}, R^n\right),
	\end{cases} 
\end{align}
\begin{align}\label{psit}
	& \hat{W}_t(s, t) =
	\begin{cases}
		l (1 - \beta) y^{-\beta}(t)y^{\prime}(t) s, & t \in (0, T), s \in \left(0, \frac{1}{y(t)}\right), \\
		l \beta^{1 - \beta} (1 - \beta)  \left(s - \frac{1 - \beta}{y(t)}\right)^{\beta - 1} \frac{y^{\prime}(t)}{y^2(t)}, & t \in (0, T), s \in \left(\frac{1}{y(t)}, R^n\right),
	\end{cases}
\end{align}
and
\begin{align}\label{phis}
	& \hat{U}_s(s, t) =
	\begin{cases}
		l y^{1 - \alpha}(t), & t \in (0, T), s \in \left(0, \frac{1}{y(t)}\right), \\
		l \alpha^{1 - \alpha}  \left(s - \frac{1 - \alpha}{y(t)}\right)^{\alpha - 1}, & t \in (0, T), s \in \left(\frac{1}{y(t)}, R^n\right),
	\end{cases} 
\end{align}
\begin{align}\label{psis}
	& \hat{W}_s(s, t) =
	\begin{cases}
		l y^{1 - \beta}(t), & t \in (0, T), s \in \left(0, \frac{1}{y(t)}\right), \\
		l \beta^{1 - \beta}  \left(s - \frac{1 - \beta}{y(t)}\right)^{\beta - 1}, & t \in (0, T), s \in \left(\frac{1}{y(t)}, R^n\right),
	\end{cases}
\end{align}
as well as
\begin{align}\label{phiss}
	& \hat{U}_{ss}(s, t) =
	\begin{cases}
		0, & t \in (0, T), s \in \left(0, \frac{1}{y(t)}\right), \\
		l \alpha^{1 - \alpha} (\alpha - 1) \cdot \left(s - \frac{1 - \alpha}{y(t)}\right)^{\alpha - 2}, & t \in (0, T), s \in \left(\frac{1}{y(t)}, R^n\right),
	\end{cases} 
\end{align}
\begin{align}\label{psiss}
	& \hat{W}_{ss}(s, t) =
	\begin{cases}
		0, & t \in [0, T), s \in \left(0, \frac{1}{y(t)}\right), \\
		l \beta^{1 - \beta} ( \beta-1)  \left(s - \frac{1 - \beta}{y(t)}\right)^{\beta - 2}, & t \in (0, T), s \in \left(\frac{1}{y(t)}, R^n\right).
	\end{cases}
\end{align}
Let $\theta>1$ be large enough, and we furthermore introduce our comparison functions
\begin{equation}\label{eq3_7}
	\begin{cases}
		\begin{array}{ll}
			\underline{U}(s, t) := \mathrm{e}^{-\theta t} \hat{U}(s, t), & t \in [0, T), s \in \left[0, R^n\right],  \\
			\underline{W}(s, t) := \mathrm{e}^{-\theta t} \hat{W}(s, t), &t \in [0, T), s \in \left[0, R^n\right].
		\end{array}
	\end{cases}
\end{equation}
Since $\hat{U}, \hat{W}$ belong to $C^1([0, R^n] \times [0, T)) \cap C^0([0, T); W^{2, \infty}((0, R^n))) $, We know that
$$\underline{U}, \underline{W} \in C^1([0, R^n] \times [0, T)) \cap C^0([0, T); W^{2, \infty}((0, R^n))).$$ 
We shall check $\underline{U}$, $\underline{W}$ are subslutions to (\ref{eq1.1}).
The proof of this fact is divided into three parts. 

Let us
start with the inner region.
\begin{lem}\label{lem3.1}
Suppose that $m_1,m_2>1$ satisfy \eqref{eq1.6}. Given $\alpha, \beta, \delta \in (0,1)$ as provided by Lemma \ref{alphabeta}, we can find $y_0$ satisfying 
	\begin{align}\label{eq3_8}
		y_0 > y_{\star}:=\max\left\{ 1, \left(\frac{2\mu^{\star}\mathrm{e}}{nl}\right)^{\frac{1}{1-\beta}}, 
\left(\frac{2\mu^{\star}\mathrm{e}}{nl}\right)^{\frac{1}{1-\alpha}}, \frac{1}{R^n}  \right\}.
	\end{align}
Assume that $T>0$ and $y(t) \in C^1([0, T))$ satisfies
	\begin{align}\label{y1}
		\left\{\begin{array}{l}
			0 \leqslant y^{\prime}(t)\leqslant 
			\min\big\{\frac{nl}{2\mathrm{e}^{2}(1-\alpha)}  , \frac{nl}{2\mathrm{e}^{2}(1-\beta)}  \big\}y^{1+\delta}(t), \quad t\in (0,T), \\
			y(0) \geqslant y_0,
		\end{array}\right.
	\end{align}
	then, for any $\theta>0$, the functions $\underline{U}$ and $\underline{W}$ from $(\ref{eq3_7})$ satisfy
	\begin{align*}
		\mathcal{P}[\underline{U}, \underline{W}](s, t) \leqslant 0, \quad \mathcal{Q}[\underline{U}, \underline{W}](s, t) \leqslant 0, 
	\end{align*}
	for all $t \in (0,T) \cap \left(0, \frac{1}{\theta}\right)$ and $s \in \left(0, \frac{1}{y(t)}\right)$.
\end{lem}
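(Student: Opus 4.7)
\emph{The plan is a direct computation of $\mathcal{P}[\underline U,\underline W]$ and $\mathcal{Q}[\underline U,\underline W]$ on the inner piecewise-linear branch, followed by polynomial bookkeeping in $y(t)$.} The decisive observation is that, by (\ref{phiss}) and (\ref{psiss}), one has $\underline U_{ss}=\underline W_{ss}\equiv 0$ on $s\in(0,1/y(t))$, so the diffusion term $n^{2} s^{2-\frac{2}{n}}\underline U_{ss}D_1(n\underline U_s)$ in the definition (\ref{eq2.4}) of $\mathcal{P}$ drops out completely. Substituting the inner expressions from (\ref{phi}), (\ref{psi}), (\ref{phit}), (\ref{phis}), together with $\underline U=\mathrm e^{-\theta t}\hat U$, $\underline W=\mathrm e^{-\theta t}\hat W$, and factoring out the common $\mathrm e^{-\theta t}\,l\,s$, I would arrive at
\begin{align*}
\mathcal{P}[\underline U,\underline W](s,t)=\mathrm e^{-\theta t}\,l\,s\,\Big\{(1-\alpha)\,y^{-\alpha}(t)\,y'(t)+\big(\mu^{\star}-\theta\big)\,y^{1-\alpha}(t)-n\,l\,\mathrm e^{-\theta t}\,y^{2-\alpha-\beta}(t)\Big\}.
\end{align*}

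From this identity the task reduces to estimating three powers of $y$. Since $-\theta\,y^{1-\alpha}\leqslant 0$ is harmless, since $t<1/\theta$ delivers $\mathrm e^{-\theta t}\geqslant 1/\mathrm e$, and since (\ref{y1}) provides $(1-\alpha)y^{-\alpha}y'\leqslant \tfrac{nl}{2\mathrm e^{2}}\,y^{1+\delta-\alpha}$, it will be enough to establish the pointwise inequality
\begin{align*}
\frac{nl}{2\mathrm e^{2}}\,y^{1+\delta-\alpha}+\mu^{\star}\,y^{1-\alpha}\leqslant \frac{nl}{\mathrm e}\,y^{2-\alpha-\beta}.
\end{align*}
I would deduce this by absorbing each left-hand summand into one half of the right-hand side. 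For the $\mu^{\star}$-term, dividing through by $y^{1-\alpha}$ reduces the requirement to $y^{1-\beta}\geqslant 2\mu^{\star}\mathrm e/(nl)$, which is precisely the threshold built into (\ref{eq3_8}) and is supplied by $y(t)\geqslant y(0)\geqslant y_{0}$ (monotonicity of $y$ following from $y'\geqslant 0$). For the $y'$-term, dividing by $y^{1+\delta-\alpha}$ reduces it to $y^{\delta+\beta-1}\leqslant \mathrm e$, which the exponent inequality (\ref{delta2}) from Lemma \ref{alphabeta} gives for free once $y\geqslant 1$.

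The verification of $\mathcal{Q}[\underline U,\underline W]\leqslant 0$ is fully symmetric under $(\alpha,\beta,m_1)\leftrightarrow(\beta,\alpha,m_2)$: one uses $(1-\beta)y^{-\beta}y'\leqslant \tfrac{nl}{2\mathrm e^{2}}\,y^{1+\delta-\beta}$ from (\ref{y1}), invokes (\ref{delta1}) in place of (\ref{delta2}), and relies on the threshold $y_{0}>(2\mu^{\star}\mathrm e/(nl))^{1/(1-\alpha)}$ from (\ref{eq3_8}) to absorb the $\mu^{\star}$-contribution. I do not anticipate any genuine obstacle: the argument is driven by the single structural point that on the inner branch the nonlinear diffusion vanishes identically, so the drift cross-term $-n\underline U_{s}\underline W$ must on its own dominate both the time-derivative and the $\mu^{\star}$-linear term, and the constants in (\ref{eq3_8}) together with the exponent conditions of Lemma \ref{alphabeta} are tailored precisely to make this domination hold.
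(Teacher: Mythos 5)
Your proposal is correct and follows essentially the same route as the paper: on the inner branch the diffusion term vanishes, the $-\theta$-term is discarded, half of the cross-term $-n\underline{U}_s\underline{W}$ absorbs the $\mu^{\star}$-contribution via the thresholds $\left(\frac{2\mu^{\star}\mathrm{e}}{nl}\right)^{\frac{1}{1-\beta}}$, $\left(\frac{2\mu^{\star}\mathrm{e}}{nl}\right)^{\frac{1}{1-\alpha}}$ in \eqref{eq3_8}, and the other half dominates the $y'$-term through \eqref{y1} together with \eqref{delta2} (resp. \eqref{delta1}) and $y\geqslant 1$. The only difference is cosmetic bookkeeping (you factor out $\mathrm{e}^{-\theta t} l s$ and state one sufficient pointwise inequality in $y$), so no further changes are needed.
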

\begin{proof} 
	According to $y(t)\geqslant y_0 > \frac{1}{R^n}$, we derive that $\frac{1}{y(t)} < R^n$. 
From the definitions of $\underline{U}$ and $\underline{W}$ that
\begin{align}
	\mathcal{P}[\underline{U}, \underline{W}](s, t) &= \underline{U}_t - n^2 s^{2 - \frac{2}{n}} \underline{U}_{ss} D_1\left(n \underline{U}_s\right) - n \underline{U}_s \cdot \Big(\underline{W} - \frac{\mu^{\star} s}{n}\Big)
	\nonumber \\
	&= -\theta \mathrm{e}^{-\theta t} l y^{1 - \alpha}(t) s +
	\mathrm{e}^{-\theta t} l (1 - \alpha) y^{-\alpha}(t) y^{\prime}(t) s \nonumber \\
	&\quad - n \mathrm{e}^{-\theta t} l y^{1 - \alpha}(t) \Big(\mathrm{e}^{-\theta t} l y^{1 - \beta}(t) s - \frac{\mu^{\star} s}{n}\Big) \nonumber \\
	&\leqslant \mathrm{e}^{-\theta t} l (1 - \alpha) y^{-\alpha}(t) y^{\prime}(t) s
	- n \mathrm{e}^{-\theta t} l y^{1 - \alpha}(t) \Big(\mathrm{e}^{-\theta t} l y^{1 - \beta}(t) s - \frac{\mu^{\star} s}{n}\Big) \label{eq3_10}
\end{align}
for all $t \in (0,T) \cap \left(0, \frac{1}{\theta}\right)$ and $s \in \left(0, \frac{1}{y(t)}\right)$. Due to $t \in (0,T) \cap \left(0, \frac{1}{\theta}\right)$, we have
	\begin{align}\label{thetat}
		\theta t<1, \quad t\in (0,T)\cap \Big(0, \frac{1}{\theta}\Big).
	\end{align}
By the monotonicity of $y(t)$ and the second restriction in \eqref{eq3_8}, together with (\ref{thetat}), we derive that
\begin{align*}
   \frac{1}{2}	\mathrm{e}^{-\theta t} l y^{1 - \beta}(t) s - \frac{\mu^{\star} s}{n} 
	>  \frac{\mathrm{e}^{-1} l y_0^{1 - \beta} s}{2} 
	-\frac{\mu^{\star} s}{n}
	> 0.
\end{align*}
Therefore we can deduce that
\begin{align*}
	\mathcal{P}[\underline{U}, \underline{W}](s, t)&\leqslant l (1 - \alpha) y^{-\alpha}(t) y^{\prime}(t) s-\frac{n\mathrm{e}^{-2} l^2 y^{2-\alpha - \beta}(t) s}{2}\\
	&=l (1 - \alpha) y^{-\alpha}(t)  s\left(y^{\prime}(t)-\frac{nl}{2\mathrm{e}^{2}(1-\alpha)}y^{2-\beta}(t)\right)
\end{align*}
for all $t \in (0,T) \cap \left(0, \frac{1}{\theta}\right)$ and $s \in \left(0, \frac{1}{y(t)}\right)$. By $y(t)\geqslant y_0>1$, together with (\ref{delta2}) and (\ref{y1}), we infer that
\begin{align*}
	\mathcal{P}[\underline{U}, \underline{W}](s, t)
	\leqslant&\, l (1 - \alpha) y^{-\alpha}(t)  s
	\Big(y^{\prime}(t)-\frac{nl}{2\mathrm{e}^{2}(1-\alpha)}   y^{1+\delta}(t)  \Big)  
	\leqslant 0
\end{align*}
for all $t \in (0,T) \cap \left(0, \frac{1}{\theta}\right)$ and $s \in \left(0, \frac{1}{y(t)}\right)$.
Owing to the symmetry of the constructed subsolutions \(\underline{U}\) and \(\underline{W}\), and the differential operators \(\mathcal{P}\) and \(\mathcal{Q}\), it is possible to extend the argument. By applying the third restriction in \eqref{eq3_8}, in conjunction with (\ref{y1}), we can establish that
\begin{align*}
	\mathcal{Q}[\underline{U}, \underline{W}](s, t) \leqslant 0,
\end{align*}
for all $t \in (0,T) \cap \left(0, \frac{1}{\theta}\right)$ and $s \in \left(0, \frac{1}{y(t)}\right)$.  This completes the proof.
\end{proof}
Let
\begin{align} \label{c1}
	c_{1} = \max\left\{\frac{\beta}{\alpha}, 1\right\} , 
\quad c_{2} = \min\left\{\frac{\beta}{\alpha}, 1\right\},
\end{align}
where $\alpha, \beta \in (0,1)$ as determined in Lemma \ref{alphabeta}.
The following lemma establishes the non-positivity of \( \mathcal{P}[\underline{U}, \underline{W}](s, t) \) and \( \mathcal{Q}[\underline{U}, \underline{W}](s, t) \) in the transition region \( (\frac{1}{y(t)}, s_{0}] \), where \( s_{0} \) is a fixed sufficiently small constant satisfying
\( s_{0} < R^n \) and
	
	\begin{align}\label{sstar1}
		{s_{0}}^{1-\alpha}<\frac{\alpha^{1-\alpha}nl}{\mathrm{e}},
		\quad {s_{0}}^{1-\alpha} < \frac{\alpha^{1-\alpha}nl}{2\mu^{\star}\mathrm{e}},
	\end{align}
	and
	\begin{align}\label{sstar2}
		s_{0}^{1-\beta}< \frac{\beta^{1-\beta}nl}{ \mathrm{e}}, \quad
		{s_{0}}^{1-\beta} < \frac{\beta^{1-\beta}nl}{2\mu^{\star}\mathrm{e}},
	\end{align}
	as well as
	\begin{align}\label{sstar3}
	\left\{\begin{array}{l}
		\frac{4\beta^{\beta}\mathrm{e}^{2}\alpha^{\delta-1}}
		{c_{2}^{\beta} nl }
		-{s_{0}}^{\beta+\delta-1} \leqslant 0, \\
\frac{4\beta^{\beta}\mathrm{e}^{2}k_1 n^{m_1}l^{m_1-2}\alpha^{(m_1-1)(1-\alpha)-2+\frac{2}{n}}}
			{c_{2}^{\beta} }
			-{s_{0}}^{(m_1-1)(1-\alpha)+\beta-1+\frac{2}{n}} \leqslant 0,
	\end{array}\right.
	\end{align}
	and
	\begin{align}\label{sstar4}
	\left\{\begin{array}{l}
	\frac{4c_{1}^{\alpha}\alpha^{\alpha}\mathrm{e}^{2}\beta^{\delta-1}}
	{ nl }
	-{s_{0}}^{\alpha+\delta-1} \leqslant 0, \\
	4c_{1}^{\alpha}\alpha^{\alpha}\mathrm{e}^{2}k_2 n^{m_2}l^{m_2-2}\beta^{(m_2-1)(1-\beta)-2+\frac{2}{n}}
	-{s_{0}}^{(m_2-1)(1-\beta)+\alpha-1+\frac{2}{n}} \leqslant 0.
	\end{array}\right.	
\end{align} 
Actually, the choice of $\alpha,\beta,\delta \in (0,1)$ as determined in Lemma \ref{alphabeta} guarantees the existence of $s_{0}>0$ satisfying (\ref{sstar1})-(\ref{sstar4}).

\begin{lem}\label{lem3.3}
	Suppose that $m_1,m_2>1$ satisfy \eqref{eq1.6}. Let $\alpha, \beta, \delta \in (0,1)$ as provided by Lemma \ref{alphabeta} and $s_0$ satisfying $(\ref{sstar1})$-$(\ref{sstar4})$. Assume that $T>0$ and $y(t) \in C^1([0, T))$ satisfy
	\begin{align}\label{y2}
		\left\{\begin{array}{l}
		0\leqslant	y^{\prime}(t) \leqslant y^{1+\delta}(t), \quad t \in (0,T), \\
			y(0) \geqslant y_0,
		\end{array}\right.
	\end{align}
	where
	\begin{align}\label{eq3_14}
		{y_0}>\frac{1}{s_{0}}.
	\end{align}
	Then, for any $\theta>0$, the functions $\underline{U}$ and $\underline{W}$ from $(\ref{eq3_7})$ satisfy
	\begin{align}\label{mid}
		\mathcal{P}[\underline{U}, \underline{W}](s, t) \leqslant 0, \quad  \mathcal{Q}[\underline{U}, \underline{W}](s, t) \leqslant 0,
	\end{align}
	for all $t \in (0,T) \cap \left(0, \frac{1}{\theta}\right)$ and $s \in \big(\frac{1}{y(t)},s_{0}\big]$.
\end{lem}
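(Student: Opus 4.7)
The plan is to estimate $\mathcal{P}[\underline{U},\underline{W}]$ and $\mathcal{Q}[\underline{U},\underline{W}]$ pointwise on the transition region $s\in(1/y(t),s_0]$ by direct substitution of the explicit formulas (3.14)--(3.21). Writing $\rho:=s-(1-\alpha)/y(t)$ and $\tilde\rho:=s-(1-\beta)/y(t)$, the key elementary observation is that on this region $\rho\geqslant\alpha s$ and $\tilde\rho\geqslant\beta s$, and more refined $c_2\rho\leqslant\tilde\rho\leqslant c_1\rho$ with $c_1,c_2$ from \eqref{c1}. The contribution $-\theta\mathrm{e}^{-\theta t}\hat U$ to $\underline{U}_t$ is nonpositive and will simply be discarded.

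Two preparatory inequalities are needed. First, $n\underline{U}_s\geqslant 1$ throughout the region, so that \eqref{eq1.4} gives $D_1(n\underline{U}_s)\leqslant k_1(n\underline{U}_s)^{m_1-1}$: via $\rho\leqslant s\leqslant s_0$ and $\mathrm{e}^{-\theta t}\geqslant\mathrm{e}^{-1}$ this reduces to the first inequality in \eqref{sstar1}. Second, $\underline{W}\geqslant 2\mu^{\star}s/n$, so that $\underline{W}-\mu^{\star}s/n\geqslant\underline{W}/2$ and hence $-n\underline{U}_s(\underline{W}-\mu^{\star}s/n)\leqslant-\frac{n}{2}\underline{U}_s\underline{W}$: using $\tilde\rho\geqslant\beta s$ one has $\underline{W}\geqslant\mathrm{e}^{-1}ls^{\beta}$, and the desired $\mathrm{e}^{-1}ls^{\beta}\geqslant 2\mu^{\star}s/n$ is exactly the second inequality of \eqref{sstar2}. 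The analogous preparatory bounds for $\mathcal{Q}$ come from the first inequality of \eqref{sstar2} and the second inequality of \eqref{sstar1}.

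It then remains to show that the sum of the positive time-derivative and diffusion contributions to $\mathcal{P}[\underline{U},\underline{W}]$ is dominated by $\frac{n}{2}\underline{U}_s\underline{W}$. Using $y'\leqslant y^{1+\delta}$ from \eqref{y2}, $1/y\leqslant s$, and $\delta-1<0$ yields $y'(t)/y^2(t)\leqslant s^{1-\delta}$, and direct computation then produces upper bounds on the two positive terms whose ratios to the lower bound $\frac{n}{2}\underline{U}_s\underline{W}\geqslant\frac{n}{2}\mathrm{e}^{-2}l^2\alpha^{1-\alpha}\beta^{-\beta}c_2^{\beta}\rho^{\alpha+\beta-1}$ are controlled by positive powers of $s$. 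More precisely, the common factor $\rho^{\alpha-1}$ cancels in the time-derivative ratio, leaving a bound proportional to $s^{1-\beta-\delta}$; the diffusion ratio, after using $\rho\geqslant\alpha s$ to absorb the remaining $\rho$-powers, becomes proportional to $s^{-[(m_1-1)(1-\alpha)+\beta-1+2/n]}$. Both exponents on $s$ are strictly positive by \eqref{delta2} and \eqref{pa} of Lemma \ref{alphabeta}, so each ratio vanishes as $s\to 0^{+}$; the concrete smallness thresholds that force their sum to be at most $1$ on $s\leqslant s_0$ are precisely the two inequalities in \eqref{sstar3}. The same scheme with the roles of $U,W$ and $\alpha,\beta,m_1,m_2$ swapped yields $\mathcal{Q}[\underline{U},\underline{W}]\leqslant 0$ using \eqref{sstar4} in place of \eqref{sstar3}.

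The main obstacle is the careful bookkeeping of the many constants $\alpha,\beta,c_1,c_2,l,k_i,n,\mu^{\star}$ so that the smallness conditions produced by the estimates take exactly the form \eqref{sstar3}--\eqref{sstar4}. Structurally the argument works only because Lemma \ref{alphabeta} arranged the four exponents $\beta+\delta-1$, $\alpha+\delta-1$, $(m_1-1)(1-\alpha)+\beta-1+\frac{2}{n}$ and $(m_2-1)(1-\beta)+\alpha-1+\frac{2}{n}$ to be negative, which is in turn where the structural hypothesis \eqref{eq1.6} on $m_1,m_2$ enters.
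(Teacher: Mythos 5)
Your proposal is correct and follows essentially the same route as the paper's proof: discard the $-\theta\mathrm{e}^{-\theta t}\hat U$ term, use $\alpha s\leqslant s-\frac{1-\alpha}{y(t)}\leqslant s$, the comparison $c_2\big(s-\frac{1-\alpha}{y(t)}\big)\leqslant s-\frac{1-\beta}{y(t)}\leqslant c_1\big(s-\frac{1-\alpha}{y(t)}\big)$ and $\mathrm{e}^{-1}\leqslant\mathrm{e}^{-\theta t}\leqslant 1$, invoke \eqref{eq1.4} via the first inequalities of \eqref{sstar1}--\eqref{sstar2}, absorb the $\mu^{\star}s/n$ correction into half of the attraction term via their second inequalities, and then dominate the remaining time-derivative and diffusion contributions through \eqref{sstar3}--\eqref{sstar4} together with the negative exponents \eqref{delta1}--\eqref{qb} from Lemma \ref{alphabeta}. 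The only deviations are harmless bookkeeping choices (e.g.\ bounding $y'/y^{2}\leqslant s^{1-\delta}$ rather than $y^{\delta-1}\leqslant\alpha^{\delta-1}\big(s-\frac{1-\alpha}{y(t)}\big)^{1-\delta}$), which since $\beta<1-\delta$ produce constants no larger than those in \eqref{sstar3}--\eqref{sstar4}.
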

\begin{proof}
	 \eqref{eq3_14} implies that \( \frac{1}{y(t)} < \frac{1}{y_0} < s_{0} \), which guarantees that the interval \( \big( \frac{1}{y(t)}, s_{0} \big] \) is non-empty. 
According to the definitions of $\underline{U}$, $\underline{W}$ and $\mathcal{P}$, we have
\begin{align*}
	\begin{aligned}
		\mathcal{P}[\underline{U}, \underline{W}](s, t)
		&=\underline{U}_t-n^2 s^{2-\frac{2}{n}} \underline{U}_{s s}
		D_1\left(n \underline{U}_s\right)-n \underline{U}_s \cdot\Big(\underline{W}-\frac{\mu^{\star} s}{n}\Big)\\
		&=-\theta \mathrm{e}^{-\theta t}\alpha^{-\alpha}l\Big(s-\frac{1-\alpha}{y(t)}\Big)^\alpha
		+\mathrm{e}^{-\theta t}\alpha^{1-\alpha} l(1-\alpha)\Big(s-\frac{1-\alpha}{y(t)}\Big)^{\alpha-1}\frac{y^{\prime}(t)}{y^2(t)}\\
		&\quad +\mathrm{e}^{-\theta t}n^2s^{2-\frac{2}{n}}\alpha^{1-\alpha} l (1-\alpha)\Big(s-\frac{1-\alpha}{y(t)}\Big)^{\alpha-2}D_1\left(n\mathrm{e}^{-\theta t}\alpha^{1-\alpha}l\Big(s-\frac{1-\alpha}{y(t)}\Big)^{\alpha-1}\right)\\
		&\quad -n\mathrm{e}^{-\theta t}\alpha^{1-\alpha}l\Big(s-\frac{1-\alpha}{y(t)}\Big)^{\alpha-1}
		\left(\mathrm{e}^{-\theta t}\beta^{-\beta}l\Big(s-\frac{1-\beta}{y(t)}\Big)^\beta-\frac{\mu^{\star}s}{n}\right).
	\end{aligned}
\end{align*}
Since \( s > \frac{1}{y(t)} \) and \( \delta < 1 \), it follows from Lemma \ref{alphabeta} that for given \( \alpha, \beta \in (0,1) \), we deduce that	
\begin{align}\label{s1}
	\beta s<s-\frac{1-\beta}{y(t)},
	\quad \alpha s<s-\frac{1-\alpha}{y(t)},
\end{align}
and
\begin{align}\label{s5}
	y^{\delta-1}(t)<{\alpha}^{\delta-1}\Big(s-\frac{1-\alpha}{y(t)}\Big)^{1-\delta},
	\quad y^{\delta-1}(t)<{\beta}^{\delta-1}\Big(s-\frac{1-\beta}{y(t)}\Big)^{1-\delta}.
\end{align}
Due to \eqref{s1} and $n\geqslant3$, we can obtain 
\begin{align}\label{s4}
	s^{2-\frac{2}{n}}<\alpha^{\frac{2}{n}-2}{\Big(s-\frac{1-\alpha}{y(t)}\Big)}^{2-\frac{2}{n}},
	\quad
	s^{2-\frac{2}{n}}<\beta^{\frac{2}{n}-2}{\Big(s-\frac{1-\beta}{y(t)}\Big)}^{2-\frac{2}{n}}.
\end{align} 
By $\theta t<1$, along with the first restrictions in \eqref{s1}-\eqref{s4}, we deduce
\begin{align}\label{eq3_15}
	\begin{aligned}
	&\mathcal{P}[\underline{U}, \underline{W}](s, t)\\
	\leqslant&\, \alpha^{1-\alpha} l\Big(s-\frac{1-\alpha}{y(t)}\Big)^{\alpha-1}\cdot y^{\delta-1}(t)\\
	&\, +n^2s^{2-\frac{2}{n}}\alpha^{1-\alpha}l \Big(s-\frac{1-\alpha}{y(t)}\Big)^{\alpha-2}D_1\left(n\mathrm{e}^{-\theta t}\alpha^{1-\alpha}l\Big(s-\frac{1-\alpha}{y(t)}\Big)^{\alpha-1}\right)\\
	&\, -n\mathrm{e}^{-\theta t}\alpha^{1-\alpha}l\Big(s-\frac{1-\alpha}{y(t)}\Big)^{\alpha-1}
	\left(\mathrm{e}^{-\theta t}\beta^{-\beta}l\Big(s-\frac{1-\beta}{y(t)}\Big)^\beta-\frac{\mu^{\star}s}{n}\right)\\
	\leqslant& \, l\alpha^{\delta-\alpha}\Big(s-\frac{1-\alpha}{y(t)}\Big)^{\alpha-\delta}  
	+n^2 l\alpha^{\frac{2}{n}-1-\alpha} \Big(s-\frac{1-\alpha}{y(t)}\Big)^{\alpha-\frac{2}{n}}D_1\left(n\mathrm{e}^{-\theta t}\alpha^{1-\alpha}l\Big(s-\frac{1-\alpha}{y(t)}\Big)^{\alpha-1}\right)\\
	&\, -n\mathrm{e}^{-1}\alpha^{1-\alpha}l\Big(s-\frac{1-\alpha}{y(t)}\Big)^{\alpha-1}
	\left(\mathrm{e}^{-1}\beta^{-\beta}l\Big(s-\frac{1-\beta}{y(t)}\Big)^\beta-\frac{\mu^{\star}}{n\beta}\Big(s
	-\frac{1-\beta}{y(t)}\Big)\right).
	\end{aligned}
\end{align}
Owing to the second restriction in (\ref{sstar2}), we have 
\begin{align*}
\frac{\beta^{-\beta}l}{2\mathrm{e}}-\frac{\mu^{\star}}{n\beta}{s_{0}}^{1-\beta} >0.
\end{align*}
Thus, combining this with $0< \beta <1$ , we deduce that
\begin{align}\label{eq3_16}
	 &\frac{1}{2}\mathrm{e}^{-1}\beta^{-\beta}l\Big(s-\frac{1-\beta}{y(t)}\Big)^\beta-\frac{\mu^{\star}}{n\beta}\Big(s-\frac{1-\beta}{y(t)}\Big) \nonumber \\
	=& \, \left(\frac{\beta^{-\beta}l}{2\mathrm{e}}-\frac{\mu^{\star}}{n\beta}\Big(s-\frac{1-\beta}{y(t)}\Big)^{1-\beta}\right)\Big(s-\frac{1-\beta}{y(t)}\Big)^\beta \nonumber \\
	\geqslant &\, \Big(\frac{\beta^{-\beta}l}{2\mathrm{e}}-\frac{\mu^{\star}}{n\beta}{s}^{1-\beta}\Big)\Big(s-\frac{1-\beta}{y(t)}\Big)^\beta \nonumber\\
	\geqslant &\, \Big(\frac{\beta^{-\beta}l}{2\mathrm{e}}-\frac{\mu^{\star}}{n\beta}{s_{0}}^{1-\beta}\Big)\Big(s-\frac{1-\beta}{y(t)}\Big)^\beta \nonumber\\
	\geqslant&\, 0 .
\end{align}
For the case $\alpha \leqslant \beta$, owing to $s>\frac{1}{y(t)}$, we have 
\begin{align*}
	\frac{\alpha}{\beta} \cdot \Big(s-\frac{1-\beta}{y(t)}\Big)
	\leqslant  s-\frac{1-\alpha}{y(t)} \leqslant s-\frac{1-\beta}{y(t)}.
\end{align*}
For the case $\alpha > \beta$, we also have
\begin{align*}
	\frac{\beta}{\alpha} \cdot \Big(s-\frac{1-\alpha}{y(t)}\Big) \leqslant s-\frac{1-\beta}{y(t)}
	\leqslant s-\frac{1-\alpha}{y(t)}.
\end{align*}
Using the definitions of $c_1$ and $c_2$ in (\ref{c1}), we have 
\begin{align}\label{alphabeta-1}
	c_{2}\Big(s-\frac{1-\alpha}{y(t)}\Big) 
	\leqslant s-\frac{1-\beta}{y(t)}
	\leqslant c_{1}\Big(s-\frac{1-\alpha}{y(t)}\Big).
\end{align}
By \eqref{eq3_16}-\eqref{alphabeta-1}, it can be inferred that
\begin{align}\label{ag}
		&n\mathrm{e}^{-1}\alpha^{1-\alpha}l\Big(s-\frac{1-\alpha}{y(t)}\Big)^{\alpha-1}
		\cdot\left(\mathrm{e}^{-1}\beta^{-\beta}l\Big(s-\frac{1-\beta}{y(t)}\Big)^\beta
		-\frac{\mu^{\star}}{n\beta}\Big(s-\frac{1-\beta}{y(t)}\Big)\right) \nonumber\\
		\geqslant &\,n\mathrm{e}^{-1}\alpha^{1-\alpha}l\Big(s-\frac{1-\alpha}{y(t)}\Big)^{\alpha-1}
		\cdot\frac{\beta^{-\beta}l}{2\mathrm{e}}\Big(s-\frac{1-\beta}{y(t)}\Big)^\beta\nonumber\\
		\geqslant &\, n\mathrm{e}^{-1}\alpha^{1-\alpha}l\Big(s-\frac{1-\alpha}{y(t)}\Big)^{\alpha-1}
		\cdot\frac{c_{2}^{\beta}\beta^{-\beta}l}{2\mathrm{e}}\Big(s-\frac{1-\alpha}{y(t)}\Big)^\beta\nonumber\\
		\geqslant&\,
		\frac{c_{2}^{\beta}\beta^{-\beta}nl^2\alpha^{1-\alpha}}{2\mathrm{e}^2}\Big(s-\frac{1-\alpha}{y(t)}\Big)^{\alpha+\beta-1}.
\end{align}
It follows from the first restriction in (\ref{sstar1}) that
\begin{align*}
	n\mathrm{e}^{-\theta t}\alpha^{1-\alpha}l\Big(s-\frac{1-\alpha}{y(t)}\Big)^{\alpha-1}
	>\frac{\alpha^{1-\alpha}nl}{ \mathrm{e}} s^{\alpha-1}
	\geqslant\frac{\alpha^{1-\alpha}nl}{ \mathrm{e}} s_{0}^{\alpha-1}>1.
\end{align*}
Thus, combining this with \eqref{eq1.4}, and substituting \eqref{ag} into \eqref{eq3_15}, we can deduce that
\begin{align*}
	\begin{aligned}
		\mathcal{P}[\underline{U}, \underline{W}](s, t)
		\leqslant& \, k_1n^{m_1+1} l^{m_1}\alpha^{(m_1-1)(1-\alpha)+\frac{2}{n}-1-\alpha} \Big(s-\frac{1-\alpha}{y(t)}\Big)^{(m_1-1)(\alpha-1)+\alpha-\frac{2}{n}} \\
		&	+l\alpha^{\delta-\alpha}\Big(s-\frac{1-\alpha}{y(t)}\Big)^{\alpha-\delta} 
			-\frac{c_{2}^{\beta}\beta^{-\beta}nl^2\alpha^{1-\alpha}}{2\mathrm{e}^2}\Big(s-\frac{1-\alpha}{y(t)}\Big)^{\alpha+\beta-1}.
	\end{aligned}
\end{align*}
By the choice of $\alpha$ and $\beta$ in Lemma \ref{alphabeta}, we have $\beta+\delta-1<0$ and $(m_1-1)(1-\alpha)+\beta-1+2/n<0$, which implies that
\begin{align}\label{p}
	\mathcal{P}[\underline{U}, \underline{W}](s, t)  
	\leqslant& \,\frac{c_{2}^{\beta}\beta^{-\beta}nl^2\alpha^{1-\alpha}}{4\mathrm{e}^2}\Big(s-\frac{1-\alpha}{y(t)}\Big)^{\alpha-\delta} 		\left(\frac{4\beta^{\beta}\mathrm{e}^{2}\alpha^{\delta-1}}
	{c_{2}^{\beta} nl }
	-{s}^{\beta+\delta-1}\right) \nonumber \\   
	&\,+\frac{c_{2}^{\beta}\beta^{-\beta}nl^2\alpha^{1-\alpha}}{4\mathrm{e}^2}
	\Big(s-\frac{1-\alpha}{y(t)}\Big)^{(m_1-1)(\alpha-1)+\alpha-\frac{2}{n}}  \nonumber  \\
	&\,\left(		\frac{4\beta^{\beta}\mathrm{e}^{2}k_1 n^{m_1}l^{m_1-2}\alpha^{(m_1-1)(1-\alpha)-2+\frac{2}{n}}}
	{c_{2}^{\beta} }
	-{s}^{(m_1-1)(1-\alpha)+\beta-1+\frac{2}{n}} 
	\right). 
\end{align}
According to (\ref{sstar3}), we obtain that
\begin{align}\label{mid-1}
		\mathcal{P}[\underline{U}, \underline{W}](s, t)
			\leqslant&\,\frac{c_{2}^{\beta}\beta^{-\beta}nl^2\alpha^{1-\alpha}}{4\mathrm{e}^2}\Big(s-\frac{1-\alpha}{y(t)}\Big)^{\alpha-\delta} 		\left(\frac{4\beta^{\beta}\mathrm{e}^{2}\alpha^{\delta-1}}
		{c_{2}^{\beta} nl }
		-{s_{0}}^{\beta+\delta-1}\right) \nonumber \\   
		&\, +\frac{c_{2}^{\beta}\beta^{-\beta}nl^2\alpha^{1-\alpha}}{4\mathrm{e}^2}
		\Big(s-\frac{1-\alpha}{y(t)}\Big)^{(m_1-1)(\alpha-1)+\alpha-\frac{2}{n}}  \nonumber  \\
		&\, \left(		\frac{4\beta^{\beta}\mathrm{e}^{2}k_1 n^{m_1}l^{m_1-2}\alpha^{(m_1-1)(1-\alpha)-2+\frac{2}{n}}}
		{c_{2}^{\beta} }
		-{s_{0}}^{(m_1-1)(1-\alpha)+\beta-1+\frac{2}{n}} 
		\right) \nonumber \\
		\leqslant&\, 0.
\end{align}

Following a similar approach to that used in the proof of (\ref{mid-1}), we can also conclude that $\mathcal{Q}[\underline{U}, \underline{W}](s, t)
\leqslant 0$ for all $t \in (0,T) \cap \left(0, \frac{1}{\theta}\right)$ and $s \in \big(\frac{1}{y(t)},s_{0}\big]$ by (\ref{sstar1}), (\ref{sstar2}) and (\ref{sstar4}). 
\end{proof} 

Finally, we prove $\mathcal{P}[\underline{U}, \underline{W}](s, t) \leqslant 0$ and $\mathcal{Q}[\underline{U}, \underline{W}](s, t) \leqslant 0$ in the outer region \( (s_{0}, R^n] \). To facilitate the proof of the subsequent lemma, we define
\begin{align}\label{1max}
	{D_1}_{\max} := \max\left\{ {D_1}(x) \mid x \in \left[nl\mathrm{e}^{-1} \alpha^{1 - \alpha} R^{n(\alpha - 1)}, nl   s^{\alpha - 1}_{0} \right]\right\}
\end{align} 
and 
\begin{align}\label{2max}
	{D_2}_{\max} := \max\left\{ {D_2}(x) \mid x \in \left[nl\mathrm{e}^{-1} \beta^{1 - \beta} R^{n(\beta - 1)}, nl   s^{\beta - 1}_{0} \right]\right\}.
\end{align}
Then, for a fixed $s_{0} < R^n$ satisfying  $(\ref{sstar1})$-$(\ref{sstar4})$, and $\alpha, \beta, \delta \in (0,1)$ as obtained in Lemma \ref{alphabeta}, we choose $\theta_0$ satisfying
\begin{align}\label{eq3_18}
		\theta_{0}  \cdot \frac{l {s_{0}}^\alpha}{\mathrm{e}} \geqslant ls^{\alpha-\delta}_{0} + \frac{n^2  R^{2n - 2} l {s_{0}}^{\alpha - 2}{D_1}_{\max}}{\alpha}  + \mu^{\star}l{s_{0}}^{\alpha - 1} R^n
	\end{align}
	and 
	\begin{align}\label{eq3_18-1}
		\theta_{0} \cdot \frac{l {s_{0}}^\beta}{\mathrm{e}} \geqslant l s^{\beta-\delta}_{0}+ \frac{n^2  R^{2n - 2}l {s_{0}}^{\beta - 2}{D_2}_{\max}}{\beta} \cdot  + \mu^{\star}l{s_{0}}^{\beta - 1} R^n.
	\end{align}
\begin{lem}\label{lem3.4}
Suppose that $m_1, m_2 > 1$ satisfy $(\ref{eq1.6})$. Let $\alpha, \beta, \delta \in (0,1)$ be as provided by Lemma \ref{alphabeta}, let $s_0$ satisfy $(\ref{sstar1})$-$(\ref{sstar4})$, and let $\theta_0$ satisfy $(\ref{eq3_18})$ and $(\ref{eq3_18-1})$. Assume that $T>0$ and $y(t) \in C^1([0, T))$satisfy
	\begin{align}\label{y3}
		\left\{\begin{array}{l}
			0\leqslant y^{\prime}(t) \leqslant y^{1+\delta}(t), \quad t\in (0,T), \\
			y(0) \geqslant y_0,
		\end{array}\right.
	\end{align}
	with $y_0>\frac{1}{s_{0}}$. Then, whenever $\theta>\theta_{0}$, the functions $\underline{U}$ and $\underline{W}$ from $(\ref{eq3_7})$ satisfy
	\begin{align*}
		\mathcal{P}[\underline{U}, \underline{W}](s, t) \leqslant 0, \quad \mathcal{Q}[\underline{U}, \underline{W}](s, t) \leqslant 0,
	\end{align*}
	for all $t \in (0,T) \cap \left(0, \frac{1}{\theta}\right)$ and $s \in \left(s_{0}, R^n\right)$.
\end{lem}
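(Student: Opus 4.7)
The plan is to exploit the fact that in the outer region $s \in (s_0, R^n)$, the subsolution $\hat{U}(s,t)$ is bounded below by a positive constant, so that the contribution $-\theta\mathrm{e}^{-\theta t}\hat{U}$ to $\underline{U}_t$ becomes arbitrarily large (and negative) as $\theta$ grows, allowing it to absorb every other term in $\mathcal{P}[\underline{U},\underline{W}]$. The role of \eqref{eq3_18}–\eqref{eq3_18-1} is precisely to quantify this ``absorption threshold'' in a form that matches the bounds derived below. The argument for $\mathcal{Q}$ is symmetric, so I would present $\mathcal{P}$ and indicate that $\mathcal{Q}$ follows by interchanging the roles of $(\alpha,D_1)$ and $(\beta,D_2)$.

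First I would record the geometric inequalities on $s \in (s_0, R^n)$. Since $y_0 > 1/s_0$ and $y$ is nondecreasing, $1/y(t) < s_0 \leqslant s$, which gives $s - \frac{1-\alpha}{y(t)} \geqslant \alpha s \geqslant \alpha s_0$, and the analogous bound with $\beta$; together with $s - \frac{1-\alpha}{y(t)} \leqslant R^n$ this yields
\begin{align*}
\alpha s_0 \leqslant s-\frac{1-\alpha}{y(t)} \leqslant R^n, \qquad \beta s_0 \leqslant s-\frac{1-\beta}{y(t)} \leqslant R^n.
\end{align*}
From \eqref{phi} and the bound on the base, $\hat{U}(s,t) \geqslant l\alpha^{-\alpha}(\alpha s_0)^\alpha = l s_0^\alpha$, so for $t \in (0,1/\theta)$,
\begin{align*}
-\theta\mathrm{e}^{-\theta t}\hat{U}(s,t) \leqslant -\theta\mathrm{e}^{-1} l s_0^\alpha.
\end{align*}

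Next I would estimate the three remaining positive contributions to $\mathcal{P}[\underline{U},\underline{W}]$. For the time part coming from $\mathrm{e}^{-\theta t}\hat{U}_t$, the bound $y'(t)/y^2(t) \leqslant y^{\delta-1}(t) \leqslant s_0^{1-\delta}$ (using $y(t) \geqslant 1/s_0$ and $\delta < 1$) combined with $(s-\frac{1-\alpha}{y(t)})^{\alpha-1} \leqslant (\alpha s_0)^{\alpha-1}$ and $1-\alpha \leqslant 1$ produces $\mathrm{e}^{-\theta t}\hat{U}_t \leqslant l s_0^{\alpha-\delta}$. For the diffusion term, $|\underline{U}_{ss}| \leqslant l\alpha^{-1}(1-\alpha) s_0^{\alpha-2}$ and $s^{2-2/n} \leqslant R^{2n-2}$; the crucial point is that the values of $n\underline{U}_s = n\mathrm{e}^{-\theta t}l\alpha^{1-\alpha}(s-\frac{1-\alpha}{y(t)})^{\alpha-1}$ on the region under consideration fall exactly inside the interval $[nl\mathrm{e}^{-1}\alpha^{1-\alpha}R^{n(\alpha-1)},\, nl s_0^{\alpha-1}]$ appearing in \eqref{1max}, so $D_1(n\underline{U}_s) \leqslant {D_1}_{\max}$ and the whole diffusion term is bounded by $\frac{n^2 R^{2n-2} l s_0^{\alpha-2} {D_1}_{\max}}{\alpha}$. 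Finally, since $\underline{W} \geqslant 0$, the drift term admits the crude estimate $-n\underline{U}_s(\underline{W}-\mu^\star s/n) \leqslant \mu^\star s\,\underline{U}_s \leqslant \mu^\star R^n l s_0^{\alpha-1}$. Summing and applying \eqref{eq3_18} for $\theta > \theta_0$ closes the $\mathcal{P}$-estimate. An identical argument using \eqref{2max} and \eqref{eq3_18-1} handles $\mathcal{Q}$.

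The main technical point is step (ii): one must verify that $n\underline{U}_s$ indeed ranges in the interval whose maximum of $D_1$ defines ${D_1}_{\max}$. This uses the two-sided control on $s-\frac{1-\alpha}{y(t)}$ together with $\mathrm{e}^{-\theta t} \in [\mathrm{e}^{-1},1]$, and is the only place where the constants $y_0$, $s_0$, and $1/\theta$ interact nontrivially. Everything else is bookkeeping engineered so that each of the three positive terms matches one of the three summands on the right-hand side of \eqref{eq3_18} and \eqref{eq3_18-1}.
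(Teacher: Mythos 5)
Your proposal is correct and follows essentially the same route as the paper: the same two-sided bound $\alpha s_0 \leqslant s-\frac{1-\alpha}{y(t)}\leqslant R^n$ (and its $\beta$-analogue), the same observation that $n\underline{U}_s$ lies in the interval defining ${D_1}_{\max}$ in \eqref{1max} so that $D_1(n\underline{U}_s)\leqslant {D_1}_{\max}$, the same term-by-term estimates $l s_0^{\alpha-\delta}$, $\frac{n^2R^{2n-2}l s_0^{\alpha-2}{D_1}_{\max}}{\alpha}$, $\mu^{\star} l s_0^{\alpha-1}R^n$ absorbed by $-\theta \mathrm{e}^{-1} l s_0^{\alpha}$ via \eqref{eq3_18}, and the symmetric argument for $\mathcal{Q}$ via \eqref{2max} and \eqref{eq3_18-1}. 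No gaps.
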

\begin{proof}
Due to $s>s_{0}>\frac{1}{y(t)}$, we have 
\begin{align}\label{s7}
	R^{n}>s-\frac{1-\alpha}{y(t)}>s_{0}-\frac{1-\alpha}{y(t)}>\alpha s_{0}
	, \quad R^{n}>s-\frac{1-\beta}{y(t)}>s_{0}-\frac{1-\beta}{y(t)}>\beta s_{0}.
\end{align}
By (\ref{s7}) and $\theta t<1$, we have
\begin{align*}
	n \mathrm{e}^{-\theta t}l \alpha^{1-\alpha}\Big(s-\frac{1-\alpha}{y(t)}\Big)^{\alpha-1} \in \left[nl \mathrm{e}^{-1} \alpha^{1 - \alpha} R^{n(\alpha - 1)}, nl s^{\alpha - 1}_{0} \right]
\end{align*}
and
\begin{align*}
	n \mathrm{e}^{-\theta t}l \beta^{1-\beta}\Big(s-\frac{1-\beta}{y(t)}\Big)^{\beta-1}
	\in \left[nl \mathrm{e}^{-1} \beta^{1 - \beta} R^{n(\beta - 1)}, nl s^{\beta - 1}_{0} \right].
\end{align*}
Combining this with (\ref{1max}) and (\ref{2max}) allows for the derivation of the following results
\begin{align}\label{fmaxgmax}
	D_1\Big(n\mathrm{e}^{-\theta t}l \alpha^{1-\alpha}\Big(s-\frac{1-\alpha}{y(t)}\Big)^{\alpha-1} \Big) \leqslant {D_1}_{\max},  
	\quad D_2\Big(n\mathrm{e}^{-\theta t}l \beta^{1-\beta}\Big(s-\frac{1-\beta}{y(t)}\Big)^{\beta-1} \Big) \leqslant {D_2}_{\max}.
\end{align}
According to the definitions of $\underline{U}$, $\underline{W}$ and $\mathcal{P}$, we have
\begin{align*}
	\begin{aligned}
		\mathcal{P}[\underline{U}, \underline{W}](s, t) =& \underline{U}_t-n^2 s^{2-\frac{2}{n}} \underline{U}_{s s}
		D_1\left(n \underline{U}_s\right)-n \underline{U}_s \cdot\Big(\underline{W}-\frac{\mu^{\star} s}{n}\Big) \\
		=& -\theta \mathrm{e}^{-\theta t} l \alpha^{-\alpha} \Big(s-\frac{1-\alpha}{y(t)}\Big) ^{\alpha} 
		+ \mathrm{e}^{-\theta t} l \alpha^{1-\alpha} (1-\alpha) \Big(s-\frac{1-\alpha}{y(t)}\Big)^{\alpha-1}\frac{y^{\prime}(t)}{y^2(t)} \\ \notag
		&+ \mathrm{e}^{-\theta t} n^2 s^{2-\frac{2}{n}} l \alpha^{1-\alpha} (1-\alpha) \Big(s-\frac{1-\alpha}{y(t)}\Big)^{\alpha-2}D_1\Big(n\mathrm{e}^{-\theta t}l \alpha^{1-\alpha}\Big(s-\frac{1-\alpha}{y(t)}\Big)^{\alpha-1} \Big) \\ \notag
		&n\mathrm{e}^{-\theta t}l \alpha^{1-\alpha}\Big(s-\frac{1-\alpha}{y(t)}\Big)^{\alpha-1}  \Big(\mathrm{e}^{-\theta t}l \beta^{-\beta} \Big(s-\frac{1-\beta}{y(t)}\Big)^{\beta}-\frac{\mu^{\star}s}{n}\Big).
	\end{aligned}
\end{align*}
Then by (\ref{eq3_18}) and the first results in (\ref{s7}) and (\ref{fmaxgmax}), combined with $s_{0}>\frac{1}{y(t)}$, we obtain
\begin{align*}
	\begin{aligned}
		\mathcal{P}[\underline{U}, \underline{W}](s, t) 
		\leqslant & -\frac{l\theta {s_{0}}^\alpha}{\mathrm{e}}+  l\alpha^{1-\alpha} (\alpha s_{0})^{\alpha-1} y^{\delta-1}(t) + \frac{n^2  R^{2n - 2} l {s_{0}}^{\alpha - 2}{D_1}_{\max}}{\alpha}  + \mu^{\star}l{s_{0}}^{\alpha - 1} R^n\\ \notag
		\leqslant & -\frac{l\theta {s_{0}}^\alpha}{\mathrm{e}}+  l s_{0}^{\alpha-\delta} + \frac{n^2  R^{2n - 2} l {s_{0}}^{\alpha - 2}{D_1}_{\max}}{\alpha}  + \mu^{\star}l{s_{0}}^{\alpha - 1} R^n\\ \notag
		\leqslant & \,0,
	\end{aligned}
\end{align*}
for all $t \in (0,T) \cap \left(0, \frac{1}{\theta}\right)$ and $s \in \left(s_{0}, R^n\right)$. Similarly, due to (\ref{eq3_18-1}) and the second restrictions in (\ref{s7}) and (\ref{fmaxgmax}), we have
$
		\mathcal{Q}[\underline{U}, \underline{W}](s, t) 
		\leqslant 0,
$
for all $t \in (0,T) \cap \left(0, \frac{1}{\theta}\right)$ and $s \in \left(s_{0}, R^n\right)$. This completes the proof.
\end{proof}

\begin{proof}
[Proof of Theorem \ref{thm1_1}]
We can easily check that
\begin{align}\label{s=0-all}
	\underline{U}(0, t) =U(0,t)= 0, 
	\quad \underline{W}(0, t) =W(0,t)= 0,
	\quad t \in [0, T).
\end{align}
Using the definition of $\underline{U}$ and (\ref{ldef}), along with $\alpha^{-\alpha}=\mathrm{e}^{-\alpha \ln \alpha} \leqslant e^{\frac{1}{\mathrm{e}}}$ and $\alpha<1$, for all $t\in(0,T)$, we have
\begin{align*}
	\underline{U}(R^n, t) 
	= \mathrm{e}^{-\theta t}l \alpha^{-\alpha}  \left( R^n - \frac{1 - \alpha}{y(t)} \right)^{\alpha} 
	\leqslant  \alpha^{-\alpha} R^{n\alpha} \frac{\mu_{\star} R^n}{n \mathrm{e}^{\frac{1}{\mathrm{e}}}(R^n + 1)} 
	\leqslant \frac{\mu_{\star} R^n}{n} \cdot \frac{R^{n\alpha}}{R^n + 1} 
	\leqslant \frac{\mu_{\star} R^n}{n}.
\end{align*}
Similarly, we find that
\begin{align*}
	\underline{W}(R^n, t) \leqslant \frac{\mu_{\star} R^n}{n},
	\quad t \in (0,T).
\end{align*}
Thus, combining above two estimates with (\ref{eq2.5}), we have
\begin{align}\label{s=R-all}
	\underline{U}(R^n, t) \leqslant \frac{\mu_{\star} R^n}{n} \leqslant U(R^n, t) , 
	\quad \underline{W}(R^n, t) \leqslant \frac{\mu_{\star} R^n}{n}\leqslant W(R^n, t),
	\quad t \in [0, T).
\end{align}
Denote
\begin{align*}
	\hat{M}_1(r) = \omega_n  \underline{U}(r^n, 0), \quad \quad \hat{M}_2(r) = \omega_n \underline{W}(r^n, 0),
	\quad  r \in [0, R].
\end{align*}
Here, $\omega_n$ denotes the surface area of the unit sphere. It follows from \eqref{eq1.8} that
\begin{align}\label{t=0-1}
	U(s, 0)  =\frac{1}{\omega_n} \int_{B_{s^{\frac{1}{n}}}(0)} u_0 \mathrm{d} x 
	\geqslant \frac{1}{\omega_n} \cdot \hat{M}_1\left(s^{\frac{1}{n}}\right)
	=\underline{U}(s, 0),
	\quad  s \in\left[0, R^n\right],
\end{align}
and 
\begin{align}\label{t=0-2}
	W(s, 0)  =\frac{1}{\omega_n} \int_{B_{s^{\frac{1}{n}}}(0)} w_0 \mathrm{d} x 
	\geqslant \frac{1}{\omega_n} \cdot \hat{M}_2\left(s^{\frac{1}{n}}\right)
	=\underline{W}(s, 0),
	\quad  s \in\left[0, R^n\right].
\end{align}

For fixed $l$ defined in $(\ref{ldef})$ and $\alpha, \beta ,\delta \in (0,1)$ satisfying \eqref{delta1}-\eqref{qb}, we choose 
	\begin{align}\label{kappa}
		\Lambda=\min\Big\{1,\frac{nl}{2\mathrm{e}^{2}(1-\alpha)}, \frac{nl}{2\mathrm{e}^{2}(1-\beta)}\Big\}.
	\end{align}
	Take $s_{0}$ satisfying (\ref{sstar1})-(\ref{sstar4}), $y_{\star}$ satisfying (\ref{eq3_8}) and $\theta>\theta_0$  given by \eqref{eq3_18} and \eqref{eq3_18-1}, then we set
	\begin{align}\label{y0}
		y_0>\max\Big\{ \big(\frac{\theta}{\delta \Lambda }\big)^{\frac{1}{\delta}}, y_{\star}, \frac{1}{s_{0}}\Big\}.
	\end{align}
and
\begin{align}\label{T}
	T = \frac{1}{\delta \Lambda} y_0^{-\delta}.
\end{align}
Let $y \in C^1([0, T))$ be the blow-up solution of
	\begin{align}\label{eq3.2}
		\begin{cases}
			y^{\prime}(t) = \Lambda y^{1+\delta}(t), \quad t \in (0, T), \\
			y(0) = y_0
		\end{cases}
	\end{align}
satisfying $y^{\prime}(t) \geqslant 0$ and $y(t)\rightarrow +\infty$ as $t \nearrow T$. Therefore, we can ensure that all the assumptions on \( y(t) \) stated in Lemmas~\ref{lem3.1}–\ref{lem3.4} are satisfied. Together with the parameter settings, the results of Lemmas~\ref{lem3.1}–\ref{lem3.4} establish that
\begin{align}\label{all}
	\mathcal{P}[\underline{U}, \underline{W}](s, t) \leqslant 0, 
	\quad \mathcal{Q}[\underline{U}, \underline{W}](s, t) \leqslant 0,
\end{align}
for all $ s \in (0, R^n)\backslash \big\{\frac{1}{y(t)}\big\}$ and $t \in (0, T)\cap \left(0, \frac{1}{\theta}\right)$. Since $y_0> \big(\frac{\theta}{\delta \Gamma }\big)^{\frac{1}{\delta}}$, we have $T= \frac{1}{\delta \Gamma} y_0^{-\delta}$, which ensures that $(0,T) \cap \left(0, \frac{1}{\theta}\right)=(0,T)$.

On the basis of results (\ref{t=0-2}) - (\ref{all}), and (\ref{eq2.5}), an application of the comparison principle Lemma \ref{lem2.1} yields the conclusion that
\begin{align*}
	\underline{U}(s, t) \leqslant U(s, t) , 
	\quad \underline{W}(s, t) \leqslant W(s, t),
	\quad  (s,t) \in [0, R^n]   \times [0, T).
\end{align*}
Combining this result with (\ref{s=0-all}) and the computation given in (\ref{phis}), we establish that
\begin{align}\label{us=0}
	\frac{1}{n} \cdot u(0, t) = U_s(0, t) \geqslant \underline{U}_s(0, t) = \mathrm{e}^{-\theta t} \cdot l y^{1 - \alpha}(t) \geqslant \frac{l}{\mathrm{e}} \cdot y^{1 - \alpha}(t) \rightarrow +\infty \quad \text{as } t \nearrow T.
\end{align}
Similarly, we conclude that
\begin{align}\label{ws=0}
	\frac{1}{n} \cdot w(0, t)  \geqslant \frac{l}{\mathrm{e}} \cdot y^{1 - \beta}(t) \rightarrow +\infty \quad \text{as } t \nearrow T.
\end{align}
Therefore, we consequently deduce that $T_{\max} \leqslant T < \infty$, which is a direct consequence of identities (\ref{us=0}) and (\ref{ws=0}), in conjunction with Proposition \ref{local}.

\end{proof}

\end{document}